\documentclass[english]{amsart}

\usepackage{esint}
\usepackage[svgnames]{xcolor} 
\usepackage{color}
\usepackage[colorlinks,citecolor=red,pagebackref,hypertexnames=false,breaklinks]{hyperref}
\usepackage{pgf,tikz}
\usepackage{pdfsync}

\usepackage{dsfont}
\usepackage{url}
\usepackage[utf8]{inputenc}
\usepackage[T1]{fontenc}
\usepackage{lmodern}
\usepackage{babel}
\usepackage{mathtools}  
\usepackage{amssymb}
\usepackage{lipsum}
\usepackage{mathrsfs}
\usepackage{color}
\usepackage[skip=2.2pt plus 1pt, indent=12pt]{parskip}
\usepackage{stmaryrd}
\usepackage{soul}

\usepackage{accents}
\newlength{\dhatheight}
\newcommand{\doublehat}[1]{%
    \settoheight{\dhatheight}{\ensuremath{\hat{#1}}}%
    \addtolength{\dhatheight}{-0.35ex}%
    \hat{\vphantom{\rule{1pt}{\dhatheight}}%
    \smash{\hat{#1}}}}

\newtheorem{theorem}{Theorem}[section]
\newtheorem{proposition}{Proposition}[section]
\newtheorem{lemma}{Lemma}[section]

\newtheorem{corollary}{Corollary}[section]

\newtheorem{remark}{Remark}[section]

\numberwithin{equation}{section}

\title[Spectral Barron spaces of periodic functions]{Spectral Barron spaces of periodic functions}

\thanks{This work was supported by National Key Research and Development Programs of China (No. 2023YFA1009103), JSPS KAKENHI Grant Numbers JP25K17280, JP23KK0049, NSFC (No. 92570106) and Science and Technology Commission of Shanghai Municipality (No. 23JC1400501).}

\author{Mourad Choulli}
\address{Universit\'{e} de Lorraine, 34 cours L\'{e}opold, 54052 Nancy cedex, France}
\email{mourad.choulli@univ-lorraine.fr}

\author{Shuai Lu}
\address{School of Mathematics Sciences, SKLCAM and LMNS, Fudan University, 220 Handan Road, Shanghai 200433, China}
\email{slu@fudan.edu.cn}

\author{Hiroshi Takase}
\address{Department of Mathematics, Okayama University, 3-1-1 Tsushima-naka, Kita-ku, Okayama 700-8530, Japan}
\email{takase@math.okayama-u.ac.jp}

\date{}
 
\begin{document}
\begin{abstract}
Spectral Barron spaces are Fourier–Lebesgue-type spaces that arise in the approximation of functions by neural networks. Conventional spectral Barron spaces are mostly considered on the whole space or on a bounded domain with certain smoothness assumptions on the boundary. In this work, we investigate spectral Barron spaces for periodic functions, thereby extending the study of spectral Barron spaces to high-dimensional tori and, more generally, to locally compact Abelian groups. These extensions are carried out by considering Fourier series expansions that are isometrically isomorphic to weighted sequence spaces built upon the \(\ell^1\) sequence space. We discuss the well-posedness of PDEs in these newly introduced spaces. Two specific inverse problems for evolution equations are investigated, yielding conditional stability within the framework of spectral Barron spaces on the torus.
\end{abstract}

\subjclass[2020]{46B26,46B45,46N20,34A33,34A34,35A01,35A02,35R30.}

\keywords{Spectral Barron spaces, $n$-dimensional torus, locally compact Abelian group, sectorial operator, differential equations, inverse problems.}

\maketitle

\tableofcontents

\section{Introduction}\label{s1}

Spectral Barron spaces are Fourier–Lebesgue-type function spaces that have emerged as a central framework in the mathematical analysis of neural network approximation. The origin of this theory can be traced to Barron's seminal work \cite{Ba}, which established that functions whose Fourier transform is summable against a suitable weight can be universally approximated by a shallow neural network, i.e.,
\begin{align}\label{eq_nn}
f_m = \frac{1}{m}\sum_{j=1}^m a_j \sigma(w_j \cdot x + b_j),
\end{align}
where $\sigma$ is the activation function, \(m\) is the number of neurons, \( (a_j, w_j)\) denotes the outer- and inner-layer weights, respectively, and \(b_j\) denotes the inner-layer bias. This universal approximation achieves an \(L^2\)-error of order \(O(m^{-\frac{1}{2}})\), with a constant depending only on the Barron norm of the function and not on the input dimension. The dimension-independent rate, relying on Maurey's empirical method and its adaptation to Banach spaces of Rademacher type $2$, is the defining feature that makes spectral Barron spaces particularly suitable for high-dimensional problems, where classical approximation schemes suffer from the curse of dimensionality.

In recent years, the theory of spectral Barron spaces has undergone substantial development in several directions. One line of research has focused on sharpening the approximation-theoretic properties of spectral Barron spaces. \cite{KB_IEEE} extends the universal approximation property and rates for neural networks towards the ReLU type activation function. \cite{LiaoMing} proved uniform approximation rates for functions in the spectral Barron space, demonstrating that a deep neural network can achieve certain approximation rate, thereby reducing the smoothness requirement, and showing that increasing network depth can improve the approximation order for functions with low smoothness. \cite{SX23} characterized spectral Barron spaces as variation spaces corresponding to shallow neural networks and established sharp embeddings between spectral Barron spaces and Besov spaces with embedding constants independent of the input dimension. \cite{AC26} developed a quantitative approximation theory for shallow neural networks within a time-frequency analysis framework, further enriching the approximation-theoretic toolbox for Barron-type spaces.

Another significant line of inquiry concerns the connection between spectral Barron spaces and the regularity theory of partial differential equations. \cite{Xu20} studied the universal approximation rate of neural networks for elliptic equations. \cite{CLL21} investigated the representation of solutions to elliptic PDEs in Barron spaces, while \cite{CLLZ} established a regularity theory for static Schrödinger equations on \(\mathbb{R}^n\) in the spectral Barron space setting. In a related direction, the solution theory of Hamilton–Jacobi–Bellman equations in spectral Barron spaces has been developed, showing that, under suitable assumptions on the coefficients, the solution can be approximated by shallow neural networks without the curse of dimensionality \cite{FL26}. In our recent work as shown in \cite{CLT,CLT2}, we have systematically developed the functional-analytic foundations and the theories of elliptic and heat equations in various spectral Barron spaces. Moreover, attention has also turned to inverse problems within the spectral Barron space framework. For instance, the interpolation theory of spectral Barron spaces has been developed and applied to inverse problems through a link condition that yields conditional stability estimates for operator-type inverse equations \cite{LuM}. In a parallel development, \cite{CLT2} derived a logarithmic conditional stability estimate for the backward heat equation, which is the first stability estimate for inverse problems within the spectral Barron space setting. Together, these developments demonstrate that spectral Barron spaces provide a natural functional-analytic framework not only for approximation theory but also for the analysis of high-dimensional PDEs and their inverse problems.

Despite these above advances, the overwhelming majority of the literature on spectral Barron spaces has been confined to the Euclidean setting, either the whole space \(\mathbb{R}^n\) or a bounded domain \(\Omega \subset \mathbb{R}^n\) with suitable boundary smoothness. However, many problems of practical and theoretical interest, particularly in the analysis of periodic phenomena, lattice models, and problems posed on compact groups, call for a function space framework tailored to periodic functions. The natural geometric setting for such problems is the high-dimensional torus \(\mathbb{T}^n = (\mathbb{R}/2\pi\mathbb{Z})^n\), for instance as in \cite{LLW21} or, more generally, a locally compact Abelian group \(G\). Nevertheless, a systematic development of spectral Barron spaces for periodic functions, together with applications to PDEs and their inverse problems, remains largely unexplored.

The present work addresses this gap by investigating spectral Barron spaces for periodic functions. Our approach is based on the Fourier series expansion of periodic functions, which is isometrically isomorphic to a weighted sequence space built upon the \(\ell^1\) sequence space. This isomorphism provides a natural and computationally tractable characterization of the spectral Barron norm in the periodic setting, and it enables us to transfer various functional-analytic properties from the sequence space setting to the function space setting. The extension carries the investigation of spectral Barron spaces beyond the Euclidean domain to high-dimensional tori and, more generally, to locally compact Abelian groups. 
Within this framework, we investigate the well-posedness properties of PDEs and two specific inverse problems. These results demonstrate that the spectral Barron space framework on the torus provides a viable setting for the analysis of PDEs and inverse problems, complementing the existing conditional stability theory developed in the Euclidean setting c.f. \cite{LuM,CLT}.

The remainder of this paper is organized as follows. In Section \ref{s2}, we define the spectral Barron spaces of one-dimensional periodic functions and provide their properties, including duality, convolution, and embedding. Section \ref{s3} systematically investigates the well-posedness of differential equations on the proposed spectral Barron spaces, where fractional, nonlinear, and variable-coefficient linear PDEs are revisited. In Section \ref{s4}, we extend the above results to locally compact Abelian groups and the high-dimensional torus. The final section, Section \ref{s5}, discusses two inverse problems for evolution equations, which broadens our framework to include conditional stability of inverse problems for PDEs.

%
%
%
%
%

\section{Spectral Barron spaces of 1-D periodic functions and their properties}\label{s2}
In this section, we formally define the spectral Barron spaces of periodic functions and present their properties along the lines of those for the whole space in \cite{CLT}.

\subsection{Definition and duality}\label{sb2.1}

Let \(C_{\mathrm{per}}^0\) denote the space of continuous \(2\pi\)-periodic functions \(f:\mathbb R\to\mathbb C\), endowed with the norm
\[
\|f\|_{C_{\mathrm{per}}^0} := \|f\|_{C^0([0,2\pi])}.
\]

We define \(S\) to be the space of complex sequences indexed by \(\mathbb Z\). Recall that \(\ell^1\subset S\) consists of those sequences \(a=(a_j)_{j\in\mathbb Z}\) such that
\[
\|a\|_{\ell^1} := \sum_{j\in\mathbb Z}|a_j|<\infty,
\]
and that \((\ell^1,\|\cdot\|_{\ell^1})\) is a Banach space. For simplicity, we henceforth write \(a=(a_j)\) in place of \(a=(a_j)_{j\in\mathbb Z}\).

For \(f\in C_{\mathrm{per}}^0\), its Fourier coefficients are given by
\[
\hat f_j := \frac{1}{2\pi}\int_0^{2\pi} f(x)e^{-ijx}\,dx,\qquad j\in\mathbb Z,
\]
and we set \(\hat f:=(\hat f_j)\).

We define the spectral Barron space \(B_{\mathrm{per}}^0\) by
\[
B_{\mathrm{per}}^0 := \bigl\{ f\in C_{\mathrm{per}}^0 : \hat f\in \ell^1 \bigr\},
\]
equipped with the norm
\[
\|f\|_{B_{\mathrm{per}}^0} := \|\hat f\|_{\ell^1},\qquad f\in B_{\mathrm{per}}^0.
\]

Let $f\in B_{\mathrm{per}}^0$. Then the sequence $(g_k)$ given by
\[
g_k(x):=\sum_{j=-k}^k\hat{f}_je^{ijx},\quad x\in \mathbb{R},\; k\in \mathbb{N},
\]
converges uniformly on $\mathbb{R}$, and therefore its limit $g:=\sum_{j\in \mathbb{Z}}\hat{f}_je^{ijx}$ belongs to $C_{\mathrm{per}}^0$. Since $\hat{g}_j=\hat{f}_j$ for all $j\in \mathbb{Z}$, the uniqueness theorem (e.g. \cite[Theorem 2.7]{Ka}) yields $g=f$. In other words, we have
\begin{equation}\label{ff}
f(x)=\sum_{j\in \mathbb{Z}}\hat{f}_je^{ijx},\quad x\in \mathbb{R}^n,\; f\in B_{\mathrm{per}}^0.
\end{equation}
On the other hand, if $a=(a_j)\in \ell ^1$, then the function $f$ defined by
\[
f(x)=\sum_{j\in \mathbb{Z}}a_je^{ijx},\quad x\in \mathbb{R}^n,
\]
satisfies $\hat{f}=a$, and therefore $f\in B_{\mathrm{per}}^0$. Again by the uniqueness theorem (e.g. \cite[Theorem 2.7]{Ka}), we obtain that the map $f\in B_{\mathrm{per}}^0\mapsto \hat{f}\in \ell^1$ is bijective. In summary, $B_{\mathrm{per}}^0$ is isometrically isomorphic to $\ell ^1$. More precisely,
\[
I_0: B_{\mathrm{per}}^0\rightarrow \ell^1: f\mapsto \hat{f}
\]
is an isometric isomorphism. Consequently, the adjoint of $I_0$,
\[
I_0^\ast : \ell^\infty \rightarrow [B_{\mathrm{per}}^0]^\ast,
\]
is also an isometric isomorphism, where $\ell^\infty$ is the Banach space of complex bounded sequences, equipped with the sup norm, and is usually identified with $(\ell^1)^\ast$. Recall that the duality pairing between $\ell^1$ and $\ell^\infty$ is given by
\[
\langle b,a\rangle_{\infty,1}:=\sum_{j\in \mathbb{Z}}a_j\overline{b}_j,\quad a=(a_j)\in \ell^1,\; b=(b_j)\in \ell^\infty.
\]
If the duality pairing between $B_{\mathrm{per}}^0$ and $[B_{\mathrm{per}}^0]^\ast$ is denoted by $\langle \cdot, \cdot \rangle_0$, then
\[
\langle g,f\rangle_0=\langle (I_0^{-1})^\ast(g),I_0(f)\rangle_{\infty,1},\quad f\in B_{\mathrm{per}}^0,\; g\in [B_{\mathrm{per}}^0]^\ast.
\]

We proceed further by defining spectral Barron spaces of periodic functions of arbitrary nonnegative order. First, let $\langle j\rangle := \sqrt{1+j^2}$, $j\in \mathbb{Z}$, and for $s\ge 0$, set
\[
\ell^{1,s}:=\{a=(a_j)\in S;\; (\langle j\rangle^s \hat{f}_j)\in \ell^1\}.
\]
$\ell^{1,s}$ is a Banach space when endowed with the norm
\[
\|a\|_{\ell^{1,s}}:=\|(\langle j\rangle^s a_j)\|_{\ell^1},\quad a=(a_j)\in \ell^{1,s}.
\]
 
The following formula will be useful later:
\begin{equation}\label{df}
\widehat{f^{(k)}}_j=(ij)^k \hat{f}_j,\quad k\ge 0 ,
\end{equation}
provided that the left-hand side of the identity above is well defined.

We define the spectral Barron spaces of periodic functions of order $s\ge 0$ as follows:
\[
B_{\mathrm{per}}^s:=\{ f\in C_{\mathrm{per}}^0;\; \hat{f}\in \ell^{1,s}\}\subset C^{\lfloor s\rfloor}\; (\mbox{by \eqref{df}}).
\]
We verify that $B_{\mathrm{per}}^s$ is complete with respect to the norm
\[
\|f\|_{B_{\mathrm{per}}^s}:=\|(\hat{f}_j)\|_{\ell^{1,s}}.
\]
As above, the mapping
\[
I_s: B_{\mathrm{per}}^s\rightarrow \ell^{1,s}:f\mapsto \hat{f}
\]
is an isometric isomorphism.

Using the following formal observation
\[
\sum_{j\in \mathbb{Z}}a_j\overline{b}_j=\sum_{j\in \mathbb{Z}}[\langle j\rangle^sa_j][\langle j\rangle^{-s}\overline{b}_j],
\]
we show that $[\ell^{1,s}]^\ast$ can be identified with
\[
\ell^{\infty,-s}:=\{ a=(a_j)\in S;\; (\langle j\rangle^{-s}a_j)\in \ell^\infty\},
\]
equipped with the norm
\[
\| (a_j)\|_{\ell^{\infty,-s}}:=\|(\langle j\rangle^{-s}a_j)\|_{\ell^\infty}.
\]
Furthermore, $[B_{\mathrm{per}}^s]^\ast$ can be identified with $\ell^{\infty,-s}$, and if $\langle \cdot ,\cdot\rangle_s$ denotes the duality pairing between $B_{\mathrm{per}}^s$ and $[B_{\mathrm{per}}^s]^\ast$, then we have
\[
\langle g,f\rangle_s=\langle (I_s^\ast)^{-1}(g),I_s(f)\rangle_{\infty,1},\quad f\in B_{\mathrm{per}}^s,\; g\in [B_{\mathrm{per}}^s]^\ast.
\]

Also, note that for all $0\le s<t$, $B_{\mathrm{per}}^t\hookrightarrow B_{\mathrm{per}}^s$ and
\[
\|f\|_{B_{\mathrm{per}}^s}\le \|f\|_{B_{\mathrm{per}}^t}, \quad f\in B_{\mathrm{per}}^t.
\]

\begin{remark}\label{rem1}
{\rm
Let $\mathbb{T}:=\{z\in \mathbb{C};\; |z|=1\}$ be the one-dimensional torus. We have $\mathbb{T}\cong \mathbb{R}/\mathbb{Z}$ via identifying $x\in \mathbb{R}/\mathbb{Z}$ with $t=e^{2i\pi x}\in \mathbb{T}$. As pointed out in \cite[Section 2.4]{Fo}, the groups $\mathbb{T}$ and $\mathbb{Z}$ are dual to each other; the natural dual measures on them are the normalized Lebesgue measure $\frac{dt}{2\pi}$ on $\mathbb{T}$ and the counting measure on $\mathbb{Z}$. The Fourier transform on $\mathbb{T}$, viewed as a compact Abelian group, is given by
\[
\hat{f}(j)=\frac{1}{2\pi}\int_0^{2\pi}f(t)e^{-ijt}dt,\quad j\in \mathbb{Z}
\]
and its inverse is
\[
f(t)=\sum_{j\in \mathbb{Z}}\hat{f}(j)e^{ijt},\quad t\in \mathbb{T}.
\]
For $s\ge 0$, we define the Barron space $B^s(\mathbb{T})$ as follows:
\[
B^s(\mathbb{T}):=\{f\in C(\mathbb{T});\; (\hat{f}(j))\in \ell^{1,s}\}.
\]
With the norm
\[
\|f\|_{B^s(\mathbb{T})}:=\|(\hat{f}(j))\|_{\ell^{1,s}}, \quad f\in B^s(\mathbb{T}),
\]
$B^s(\mathbb{T})$ is a Banach space that can be identified with $B_{\mathrm{per}}^s$.

In the higher-dimensional setting, we will discuss, in particular, spectral Barron spaces on locally compact Abelian groups in Section~\ref{s4}.
}
\end{remark}

\subsection{Product, convolution and interpolation}\label{sb2.2}
We define the convolution of $a=(a_j)\in \ell^1$ and $b=(b_j)\in \ell^1$ as follows:
\begin{equation}\label{cf}
a\ast b:=c,\quad c=(c_j),\; c_j:=\sum_{k\in \mathbb{Z}}a_kb_{j-k}.
\end{equation}
Thus one can easily verify that $a\ast b\in \ell^1$ and
\begin{equation}\label{cn}
\|a\ast b\|_{\ell^1}\le \|a\|_{\ell^1}\|b\|_{\ell^1}.
\end{equation}
Moreover, $a\ast b=b\ast a$, and if $d\in \ell ^1$, then there holds $a\ast(b\ast d)=(a\ast b)\ast d$.

Let $f,g\in B_{\mathrm{per}}^0$. Because of the absolute and uniform convergence of the series used below, and in view of \eqref{ff}, we have
\begin{align*}
\widehat{(fg)}_j&=\frac{1}{2\pi}\int_0^{2\pi}f(x)g(x)e^{-ijx}dx
\\
&=\sum_{k\in \mathbb{Z}}\hat{f}_k \left(\frac{1}{2\pi}\int_0^{2\pi}g(x)e^{-i(j-k)x}dx\right)
\\
&=\sum_{k\in \mathbb{Z}}\hat{f}_k\hat{g}_{j-k}.
\end{align*}
In other words, by \eqref{cf}, we have $\widehat{fg}=\hat{f}\ast \hat{g}$. Thus, $fg\in B_{\mathrm{per}}^0$, and from \eqref{cn} we derive
\begin{equation}\label{pr1}
\|fg\|_{B_{\mathrm{per}}^0}\le \|f\|_{B_{\mathrm{per}}^0}\|g\|_{B_{\mathrm{per}}^0}.
\end{equation}

More generally, for all $s\ge 0$, using Peetre's inequality
\[
\langle j\rangle^s\le 2^{\frac{s}{2}}\langle k\rangle^s\langle j-k\rangle^s,\quad j,k\in \mathbb{Z},
\]
we can verify that the convolution is well defined on $\ell^{1,s}$ and
\begin{equation}\label{cn2}
\|a\ast b\|_{\ell^{1,s}}\le2^{\frac{s}{2}} \|a\|_{\ell^{1,s}}\|b\|_{\ell^{1,s}}.
\end{equation}
As above, we transfer this result to $B_{\mathrm{per}}^s$. Specifically, if $f,g\in B_{\mathrm{per}}^s$, then $fg\in B_{\mathrm{per}}^s$ and
\begin{equation}\label{pr2}
\|fg\|_{B_{\mathrm{per}}^s}\le 2^{\frac{s}{2}}\|f\|_{B_{\mathrm{per}}^s}\|g\|_{B_{\mathrm{per}}^s}.
\end{equation}

Let
\[
L_{\mathrm{per}}^1:=\{f:\mathbb{R}\rightarrow \mathbb{C}\; \mbox{measurable};\; f\; \mbox{is $2\pi$-periodic and}\; f\in L^1(]0,2\pi[)\},
\]
which we endow with the norm
\[
\|f\|_{L_{\mathrm{per}}^1}:=\frac{1}{2\pi}\|f\|_{L^1(]0,2\pi[)}.
\]
We recall that for all $f,g\in L_{\mathrm{per}}^1$
\[
(f\ast g)(x):=\frac{1}{2\pi}\int_0^{2\pi}f(x-y)g(y)dy,
\]
and we have
\begin{equation}\label{pr2.1}
\widehat{(f\ast g)}_j=\hat{f}_j\hat{g}_j,\quad j\in \mathbb{Z}.
\end{equation}
We verify that if $g\in L_{\mathrm{per}}^1$, then $\hat{g}\in \ell^\infty$ and $\|\hat{g}\|_{\ell^\infty}\le \|g\|_{L_{\mathrm{per}}^1}$. Consequently, using \eqref{pr2.1}, we obtain that for all $f\in B_{\mathrm{per}}^s$ and $g\in L_{\mathrm{per}}^1$, $f\ast g=g\ast f\in B_{\mathrm{per}}^s$ and
\[
\|f\ast g\|_{B_{\mathrm{per}}^s}\le \|f\|_{B_{\mathrm{per}}^s}\|g\|_{L_{\mathrm{per}}^1}.
\]

We also note from \eqref{pr2.1} that if $f\in B_{\mathrm{per}}^s$ and $g\in B_{\mathrm{per}}^0$, then $f\ast g\in B_{\mathrm{per}}^s$ and
\[
\|f\ast g\|_{B_{\mathrm{per}}^s}\le \|f\|_{B_{\mathrm{per}}^s}\|g\|_{B_{\mathrm{per}}^0}.
\]
This is an immediate consequence of the fact that
\[
\|\hat{g}\|_{\ell^\infty}\le \|\hat{g}\|_{\ell^1}=\|g\|_{B_{\mathrm{per}}^0}.
\]

At the end of this subsection, we discuss interpolation between different spectral Barron spaces. Let $0\le r<s<t$ and $f\in B_{\mathrm{per}}^t$. Since $s=(1-\alpha) r+\alpha t$ with $\alpha=\frac{s-r}{t-r}$, using the decomposition
\[
\sum_{j\in \mathbb{Z}}\langle j\rangle^s|\hat{f}_j|=\sum_{j\in \mathbb{Z}}[\langle j\rangle^{(1-\alpha) r}|\hat{f}_j|^{1-\alpha}][\langle j\rangle^{\alpha t}|\hat{f}_j|^\alpha]
\]
together with Hölder's inequality, we obtain the interpolation inequality 
\begin{equation}\label{ii}
\|f\|_{B_{\mathrm{per}}^s}\le \|f\|_{B_{\mathrm{per}}^r}^{1-\alpha}\|f\|_{B_{\mathrm{per}}^t}^\alpha.
\end{equation}

\subsection{Embedding property}\label{sb2.3}
First, we recall some classic definitions. Let $\ell^2$ be the space of sequences $a=(a_j)\in S$ such that $\sum_j|a_j|^2<\infty$. By equipping with the norm
\[
\|a\|_{\ell^2}=\left(\sum_{j\in \mathbb{Z}}|a_j|^2\right)^{\frac{1}{2}},\quad a=(a_j)\in \ell^2,
\]
we know that $\ell^2$ is a Hilbert space. For later use, we note that $\ell^1\hookrightarrow \ell^2$ with
\begin{equation}\label{1,2}
\|a\|_{\ell ^2}\le \|a\|_{\ell^1},\quad a\in \ell ^1.
\end{equation}
Also, define
\[
L_{\mathrm{per}}^2:=\{f:\mathbb{R}\rightarrow \mathbb{C}\; \mbox{measurable};\; f\; \mbox{is $2\pi$-periodic and}\; f\in L^2(]0,2\pi[\},
\]
which we endow with the norm
\[
\|f\|_{L_{\mathrm{per}}^2}:=\frac{1}{\sqrt{2\pi}}\|f\|_{L^2(]0,2\pi[)}.
\]
We recall the following Bessel–Parseval identity:
\[
\|f\|_{L_{\mathrm{per}}^2}=\|\hat{f}\|_{\ell^2}, \quad f\in L_{\mathrm{per}}^2.
\]

For $s\ge 0$, consider the Sobolev space
\[
H_{\mathrm{per}}^s=\{f\in L_{\mathrm{per}}^2;\; (\langle j\rangle^s\hat{f}_j)\in L_{\mathrm{per}}^2\}.
\]
We verify that $H_{\mathrm{per}}^s$ is a Hilbert space when endowed with the norm
\[
\|f\|_{H_{\mathrm{per}}^s}=\|(\langle j\rangle^s\hat{f}_j)\|_{L_{\mathrm{per}}^2},\quad f\in H_{\mathrm{per}}^s.
\]
For convenience, we set $H_{\mathrm{per}}^0:=L_{\mathrm{per}}^2$.

Let $f\in B^s_{\mathrm{per}}$. In light of \eqref{1,2}, we have
\[
\|f\|_{H_{\mathrm{per}}^s}=\|(\langle j\rangle^s \hat{f}_j)\|_{\ell^2} \le \|(\langle j\rangle^s \hat{f}_j)\|_{\ell^1}=\|f\|_{B^s_{\mathrm{per}}}.
\]
That is, we have proved the following result.
 
\begin{proposition}\label{pro0}
For all $s\ge 0$, $B^s_{\mathrm{per}}\hookrightarrow H_{\mathrm{per}}^s$ with
\[
\|f\|_{H_{\mathrm{per}}^s}\le \|f\|_{B^s_{\mathrm{per}}},\quad f\in B^s_{\mathrm{per}}.
\]
\end{proposition}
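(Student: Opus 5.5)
The plan is to move everything to the sequence side through the isometric isomorphism $I_s$, so that the claim becomes the elementary inclusion $\ell^1\hookrightarrow\ell^2$ applied to a weighted sequence.

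Fix $s\ge 0$ and $f\in B^s_{\mathrm{per}}$. By definition $f\in C^0_{\mathrm{per}}$, so $f$ is bounded and $2\pi$-periodic. Hence $f\in L^2_{\mathrm{per}}$, and its Fourier coefficients $\hat f_j$ agree with those used in the definition of $H^s_{\mathrm{per}}$. Set $a_j:=\langle j\rangle^s\hat f_j$. Since $\hat f\in\ell^{1,s}$, we have $a=(a_j)\in\ell^1$ and $\|a\|_{\ell^1}=\|f\|_{B^s_{\mathrm{per}}}$.

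Next I would check the inequality \eqref{1,2}, namely $\|a\|_{\ell^2}\le\|a\|_{\ell^1}$. If $a\neq 0$, then $|a_j|\le\|a\|_{\ell^1}$ for every $j$, so
\[
\sum_{j\in\mathbb Z}|a_j|^2\le \|a\|_{\ell^1}\sum_{j\in\mathbb Z}|a_j|=\|a\|_{\ell^1}^2 .
\]
Therefore $a\in\ell^2$.

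The one point needing care is the membership condition in the definition of $H^s_{\mathrm{per}}$. That condition is written as $(\langle j\rangle^s\hat f_j)\in L^2_{\mathrm{per}}$, and I interpret it as the sequence condition $(\langle j\rangle^s\hat f_j)\in\ell^2$, with norm $\|(\langle j\rangle^s\hat f_j)\|_{\ell^2}$. Equivalently, by the Bessel–Parseval identity and the Riesz–Fischer theorem, it is the $L^2_{\mathrm{per}}$-norm of the function $\sum_j\langle j\rangle^s\hat f_j e^{ijx}$. Under either reading, $a\in\ell^2$ gives $f\in H^s_{\mathrm{per}}$ and
\[
\|f\|_{H^s_{\mathrm{per}}}=\|a\|_{\ell^2}\le\|a\|_{\ell^1}=\|f\|_{B^s_{\mathrm{per}}}.
\]

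Finally, the map $f\mapsto f$ is linear, and the bound above shows it is a continuous embedding with constant $1$. It is injective by the uniqueness theorem \cite[Theorem 2.7]{Ka}. I expect no real obstacle beyond fixing this interpretation of the definition of $H^s_{\mathrm{per}}$.
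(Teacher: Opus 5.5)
Your proof is correct and is the paper's argument: set $a_j=\langle j\rangle^s\hat f_j$ and apply $\|a\|_{\ell^2}\le\|a\|_{\ell^1}$ from \eqref{1,2}. You are right to read the membership condition in $H^s_{\mathrm{per}}$ as $(\langle j\rangle^s\hat f_j)\in\ell^2$; this is the intended meaning, since the paper itself computes $\|f\|_{H^s_{\mathrm{per}}}$ as that $\ell^2$ norm.
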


We now establish the following compact embedding result.

\begin{proposition}\label{pro1}
For all $0\le s<t$, the embedding $B_{\mathrm{per}}^t\hookrightarrow B_{\mathrm{per}}^s$ is compact.
\end{proposition}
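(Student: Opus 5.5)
Since $I_s$ and $I_t$ are isometric isomorphisms onto $\ell^{1,s}$ and $\ell^{1,t}$, it suffices to prove that the inclusion $\ell^{1,t}\hookrightarrow\ell^{1,s}$ is compact. The inclusion $B_{\mathrm{per}}^t\hookrightarrow B_{\mathrm{per}}^s$ is then $I_s^{-1}\circ\iota\circ I_t$, and a compact operator composed with bounded operators is compact. I will show that $\iota$ is the norm limit of finite-rank operators. Since the set of compact operators is closed in operator norm, this gives compactness.

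For $N\in\mathbb N$, define the truncation $P_N:\ell^{1,t}\to\ell^{1,s}$ by $(P_Na)_j=a_j$ for $|j|\le N$ and $(P_Na)_j=0$ otherwise. Each $P_N$ is bounded, since $\|P_Na\|_{\ell^{1,s}}\le\|a\|_{\ell^{1,t}}$. It has rank at most $2N+1$, so it is compact. The key estimate is, for $a\in\ell^{1,t}$,
\[
\|a-P_Na\|_{\ell^{1,s}}=\sum_{|j|>N}\langle j\rangle^{s-t}\langle j\rangle^t|a_j|\le \langle N\rangle^{-(t-s)}\sum_{|j|>N}\langle j\rangle^t|a_j|\le \langle N\rangle^{-(t-s)}\|a\|_{\ell^{1,t}}.
\]
Hence $\|\iota-P_N\|\le\langle N\rangle^{-(t-s)}\to0$ as $N\to\infty$, because $t>s$.

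Alternatively, one can argue directly with sequences, which may read better in the paper. Take $(f_k)$ bounded in $B_{\mathrm{per}}^t$, say $\|f_k\|_{B_{\mathrm{per}}^t}\le M$. Each coefficient satisfies $|\widehat{(f_k)}_j|\le M$. A diagonal extraction gives a subsequence whose coefficients converge for every $j\in\mathbb Z$ to some $a_j$. By Fatou's lemma, $\|(a_j)\|_{\ell^{1,t}}\le M$, so $a=(a_j)$ corresponds via $I_t^{-1}$ to some $f\in B_{\mathrm{per}}^t$. Splitting the $\ell^{1,s}$ norm of the difference $f_k-f$ into $|j|\le N$ and $|j|>N$, the tail is bounded by $2M\langle N\rangle^{-(t-s)}$, and the finite part tends to $0$ as $k\to\infty$. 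Thus $f_k\to f$ in $B_{\mathrm{per}}^s$ along the subsequence.

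No step is a real obstacle. The only point requiring care is the uniform tail estimate, which is where the strict inequality $s<t$ is used. Without it, $\langle j\rangle^{s-t}$ does not decay, and the embedding (for instance the identity when $s=t$) is not compact.
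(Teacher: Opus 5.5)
Your proof is correct, but your primary argument follows a different route from the paper's. Both start from the same reduction to $\ell^{1,t}\hookrightarrow\ell^{1,s}$. The paper then argues sequentially. For a given $\epsilon$ it picks $j_0$ with $\langle j\rangle^{s-t}\le\epsilon/4$ for $|j|>j_0$ and bounds the tail of $a^n-a^m$ uniformly by $\epsilon/2$. It then uses Bolzano--Weierstrass in $\mathbb{C}^{2j_0+1}$ to get a subsequence that is $\epsilon$-Cauchy on the finite block.

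You instead show that the inclusion $\iota$ is an operator-norm limit of the finite-rank truncations $P_N$, with the explicit bound $\|\iota-P_N\|\le\langle N\rangle^{-(t-s)}$. This removes all subsequence bookkeeping and gives quantitative information, namely a decay rate for the approximation numbers of $\iota$. The price is invoking the closedness of compact operators in operator norm, which itself hides a diagonal or total-boundedness argument.

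Your alternative sequential proof is essentially the paper's argument, but better organized. In the paper the subsequence $(n_k)$ is chosen after $\epsilon$ (through $j_0$). Strictly speaking, the final claim that one fixed subsequence is Cauchy therefore needs either a diagonal extraction over $\epsilon=1/m$ or a reinterpretation as total boundedness of the image of the unit ball. You avoid this by first extracting a subsequence converging coordinatewise for every $j\in\mathbb{Z}$ and only then splitting at $N$. Your use of Fatou's lemma also identifies the limit as an element of the same ball in $B_{\mathrm{per}}^t$, so that ball is in fact compact in $B_{\mathrm{per}}^s$. The paper only uses completeness of $\ell^{1,s}$.
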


\begin{proof}
It suffices to establish that the embedding $\ell^{1,t}\hookrightarrow \ell^{1,s}$ is compact. Let $(a^n)_{n\in \mathbb{N}}$ be a sequence in the unit ball of $\ell^{1,t}$, with $a^n=(a_j^n)$. Then, we have
\begin{equation}\label{com1}
\sum_{j\in \mathbb{Z}}\langle j\rangle^t|a_j^n|\le 1,\quad n\in \mathbb{N}.
\end{equation}
Let $\epsilon >0$. As $\lim_{|j|\rightarrow \infty}\langle j\rangle^{s-t}=0$, there exists $j_0\in \mathbb{N}$ such that $\langle j\rangle^{s-t}\le \frac{\epsilon}{4}$ for all $|j|> j_0$. Thus,
\begin{align*}
\sum_{|j|> j_0}\langle j\rangle^s|a_j^n-a_j^m|&=\sum_{|j|> j_0}\langle j\rangle^{s-t}[\langle j\rangle^t|a_j^n-a_j^m|]
\\
&\le \frac{\epsilon}{4} \sum_{|j|> j_0}\langle j\rangle^t|a_j^n-a_j^m|,\quad n,m\in \mathbb{N},
\end{align*}
which, combined with \eqref{com1}, yields
\begin{equation}\label{com2}
\sum_{|j|> j_0}\langle j\rangle^s|a_j^n-a_j^m|\le \frac{\epsilon}{2},\quad n,m\in \mathbb{N}.
\end{equation}
On the other hand, \eqref{com1} shows that the sequence $((a_j^n)_{|j|\le j_0})_n$ is bounded in $\mathbb{C}^{2j_0+1}$, and hence admits a convergent subsequence $((a_j^{n_k})_{|j|\le j_0})_k$. In particular, $((a_j^{n_k})_{|j|\le j_0})_k$ is a Cauchy sequence, and hence there exists $k_0\in \mathbb{N}$ such that
\begin{equation}\label{com3}
\sum_{|j|\le j_0}\langle j\rangle^s|a_j^{n_{k_1}}-a_j^{n_{k_2}}|\le \frac{\epsilon}{2}, \quad k_1\ge k_0,\; k_2\ge k_0.
\end{equation}
Putting together \eqref{com2} and \eqref{com3}, we get
\[
\sum_{j\in \mathbb{Z}}\langle j\rangle^s|a_j^{n_{k_1}}-a_j^{n_{k_2}}|\le \epsilon, \quad k_1\ge k_0,\; k_2\ge k_0.
\]
That is, $(a^{n_k})_k$ is a Cauchy sequence in $\ell^{1,s}$ and therefore converges in $\ell^{1,s}$. The proof is complete. 
\end{proof}

The space $B_{\mathrm{per}}^s$ is ``close'' to an interpolation space, as the following theorem shows.
\begin{theorem}\label{thm1}
For $0\le r<s<t$, $\theta=\frac{s-r}{t-r}$ and $0<\tilde{\theta}<\theta$, there holds
\[
(B^r_{\mathrm{per}},B^t_{\mathrm{per}})_{\theta,1}\hookrightarrow B_{\mathrm{per}}^s\hookrightarrow (B^r_{\mathrm{per}},B^t_{\mathrm{per}})_{\tilde{\theta},1}.
\]
\end{theorem}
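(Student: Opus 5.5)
Via the isometric isomorphisms $I_r,I_s,I_t$, the statement is equivalent to
\[
(\ell^{1,r},\ell^{1,t})_{\theta,1}\hookrightarrow \ell^{1,s}\hookrightarrow (\ell^{1,r},\ell^{1,t})_{\tilde\theta,1}.
\]
Interpolation functors commute with isometric isomorphisms applied simultaneously to both endpoints, so it suffices to prove these two embeddings for the weighted sequence spaces. I will use the $K$-functional
\[
K(\tau,a)=\inf_{a=a^0+a^1}\bigl(\|a^0\|_{\ell^{1,r}}+\tau\|a^1\|_{\ell^{1,t}}\bigr)
\]
and the norm $\|a\|_{\theta,1}=\int_0^\infty \tau^{-\theta}K(\tau,a)\,\frac{d\tau}{\tau}$.

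\emph{First embedding.} For the class $J(\theta)$ I will use the interpolation inequality \eqref{ii} in sequence form, $\|a\|_{\ell^{1,s}}\le\|a\|_{\ell^{1,r}}^{1-\theta}\|a\|_{\ell^{1,t}}^{\theta}$. By the standard fact that a space of class $J(\theta)$ contains $(X_0,X_1)_{\theta,1}$ (the $J$-method and the equivalence theorem), this yields $(\ell^{1,r},\ell^{1,t})_{\theta,1}\hookrightarrow\ell^{1,s}$.

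Alternatively, and more explicitly, I will write $a=\sum_\nu u_\nu$ with $u_\nu$ supported where $\langle j\rangle^{t-r}\sim 2^{\nu}$. On each dyadic block, $\|u_\nu\|_{\ell^{1,s}}\sim 2^{-\nu\theta}\cdot$(both weighted norms combined), and $K(2^{-\nu},a)$ controls the block contribution, because the $\ell^1$ norms split exactly over disjoint supports. Summing over $\nu$ gives the bound by the discrete $(\theta,1)$ norm.

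\emph{Second embedding.} For $a\in\ell^{1,s}$ and $\tau>0$, I will split by frequency: put $a^1=a\,\mathbf 1_{\{\langle j\rangle^{t-r}\le 1/\tau\}}$ and $a^0=a-a^1$. Then
\[
K(\tau,a)\le\sum_j \min\bigl(\langle j\rangle^r,\tau\langle j\rangle^t\bigr)|a_j|.
\]
Integrating against $\tau^{-\tilde\theta}\frac{d\tau}{\tau}$ and applying Fubini, the $j$-th coefficient becomes $|a_j|\int_0^\infty\tau^{-\tilde\theta}\min(\langle j\rangle^r,\tau\langle j\rangle^t)\frac{d\tau}{\tau}$. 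With the substitution $\tau=\sigma\langle j\rangle^{r-t}$ this equals $c_{\tilde\theta}\langle j\rangle^{r+\tilde\theta(t-r)}$, where $c_{\tilde\theta}=\int_0^\infty\sigma^{-\tilde\theta}\min(1,\sigma)\frac{d\sigma}{\sigma}<\infty$ since $0<\tilde\theta<1$. Because $r+\tilde\theta(t-r)<s$ and $\langle j\rangle\ge1$, we get $\langle j\rangle^{r+\tilde\theta(t-r)}\le\langle j\rangle^s$, hence $\|a\|_{\tilde\theta,1}\le c_{\tilde\theta}\|a\|_{\ell^{1,s}}$.

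The computation above in fact gives the sharper embedding $\ell^{1,s}\hookrightarrow(\ell^{1,r},\ell^{1,t})_{\theta,1}$ at $\tilde\theta=\theta$, but proving the stated weaker claim suffices.

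The main obstacle is the first embedding. Citing the $J(\theta)$ result is the cleanest route, and if a self-contained proof is wanted, the dyadic-block argument works because $K$ is additive up to constants on disjointly supported $\ell^1$ pieces. I would verify that for each block $\|u_\nu\|_{\ell^{1,s}}\lesssim 2^{-\nu\theta}$ times the inverse-weighted $K(2^{-\nu},u_\nu)$, then sum, using $K(2^{-\nu},u_\nu)\le K(2^{-\nu},a)$, which holds by restricting any decomposition of $a$ to the block.
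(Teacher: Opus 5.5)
Your proposal is correct. The first embedding is obtained exactly as in the paper, via \eqref{ii} and the $J(\theta)$ characterization of $(\cdot,\cdot)_{\theta,1}$. In your dyadic alternative the block factor should be $2^{\nu\theta}=(2^{-\nu})^{-\theta}$, not $2^{-\nu\theta}$. On $\{2^\nu\le\langle j\rangle^{t-r}<2^{\nu+1}\}$ one has $\|u_\nu\|_{\ell^{1,s}}\le 2^{(\nu+1)\theta}\|u_\nu\|_{\ell^{1,r}}\le 2^{(\nu+1)\theta}K(2^{-\nu},u_\nu)$, and with that correction the argument works.

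The second embedding is where you genuinely differ from the paper. The paper splits $f$ at a single frequency $\mu$ and bounds both pieces uniformly by $\|f\|_{B^s_{\mathrm{per}}}$. Optimizing in $\mu$, this only gives $K(\tau,f)\le C\tau^{\theta}\|f\|_{B^s_{\mathrm{per}}}$, i.e.\ $B^s_{\mathrm{per}}\hookrightarrow(B^r_{\mathrm{per}},B^t_{\mathrm{per}})_{\theta,\infty}$. One must then give up $\theta-\tilde\theta$ so that $\int_0^1\tau^{-\tilde\theta}K(\tau,f)\frac{d\tau}{\tau}$ converges; the range $\tau\ge 1$ is handled by $K(\tau,f)\le\|f\|_{B^r_{\mathrm{per}}}$.

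You instead use a $\tau$-dependent, coefficientwise optimal split. This bounds $K(\tau,a)$ by $\sum_j\min(\langle j\rangle^r,\tau\langle j\rangle^t)|a_j|$ (in fact with equality, by the lattice structure of weighted $\ell^1$), and you then integrate each coefficient separately by Tonelli. Keeping track of which frequencies are active at which scale $\tau$ is precisely what the paper's uniform bound discards. It lets you reach the endpoint $\tilde\theta=\theta$ with constant $\frac{1}{\theta(1-\theta)}$.

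Combined with the first embedding, your argument proves $(B^r_{\mathrm{per}},B^t_{\mathrm{per}})_{\theta,1}=B^s_{\mathrm{per}}$ with equivalent norms. This is the weighted-$\ell^1$ instance of $(L^p(w_0),L^p(w_1))_{\theta,p}=L^p(w_0^{1-\theta}w_1^{\theta})$. So $B^s_{\mathrm{per}}$ is an actual real interpolation space, and the restriction $\tilde\theta<\theta$ in the statement is an artifact of the paper's method. The paper's route has the merit of being a direct transcription of the $\mathbb{R}^n$ argument that needs only two norm estimates for one split. Yours is equally elementary here and strictly sharper.
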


\begin{proof}
The proof is a modification of that of \cite[Theorem 2.2]{CLT}. The first embedding is a consequence of \eqref{ii} and \cite[Proposition 1.20]{Lu}. To prove the second embedding, for $f\in B_{\mathrm{per}}^s$ and $\mu >0$, we write $f=g_\mu +h_\mu$, where
\[
g_\mu (x):=\sum_{|j|>\mu}\hat{f}_je^{ijx}\quad\text{and}\quad h_\mu(x):=\sum_{|j|\le \mu}\hat{f}_je^{ijx}.
\] 
We verify that $g_\mu\in B_{\mathrm{per}}^r$, $h_\mu \in B_{\mathrm{per}}^t$ and
\[
\|g_\mu\|_{B_{\mathrm{per}}^r}\le \mu^{-(s-r)}\|f\|_{B_{\mathrm{per}}^s}\quad\text{and}\quad \|h_\mu\|_{B_{\mathrm{per}}^t}\le \mu^{t-s}\|f\|_{B_{\mathrm{per}}^s}.
\]
The remainder of the proof is analogous to that of \cite[Theorem 2.2]{CLT}, and we therefore omit it.
\end{proof}


\begin{theorem}\label{thm2}
Let $k\in \mathbb{N}\cup\{0\}$ and $0<\theta<1$. Then $B_{\mathrm{per}}^{k+\theta}\hookrightarrow C_b^{k, \theta}$.
\end{theorem}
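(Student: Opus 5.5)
Let $f\in B_{\mathrm{per}}^{k+\theta}$. The plan is to work directly with the absolutely convergent Fourier series \eqref{ff}, in two steps: first the $C^k$ bounds on $f^{(m)}$, $0\le m\le k$, and then the $\theta$-Hölder seminorm of $f^{(k)}$. The only analytic input is the elementary estimate
\[
|e^{ijx}-e^{ijy}|\le \min\{2,|j||x-y|\}\le 2^{1-\theta}|j|^\theta|x-y|^\theta,\qquad j\in\mathbb Z,\ x,y\in\mathbb R,
\]
obtained by interpolating the two trivial bounds.

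\textbf{Step 1: $C^k$ bounds.} For $0\le m\le k$ we have $|j|^m\le\langle j\rangle^{k+\theta}$. Hence the formally differentiated series $\sum_j (ij)^m\hat f_j e^{ijx}$ converges absolutely and uniformly on $\mathbb R$. Term-by-term differentiation is therefore justified, which is consistent with \eqref{df}, and gives
\[
f^{(m)}(x)=\sum_{j\in\mathbb Z}(ij)^m\hat f_je^{ijx},\qquad \|f^{(m)}\|_{L^\infty(\mathbb R)}\le \sum_{j}|j|^m|\hat f_j|\le \|f\|_{B_{\mathrm{per}}^{k+\theta}}.
\]
So $f\in C^k_b$ and each $f^{(m)}$ is bounded and continuous.

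\textbf{Step 2: Hölder seminorm of $f^{(k)}$.} For $x\ne y$, apply the elementary estimate termwise:
\[
|f^{(k)}(x)-f^{(k)}(y)|\le \sum_j |j|^k|\hat f_j|\,|e^{ijx}-e^{ijy}|\le 2^{1-\theta}|x-y|^\theta\sum_j|j|^{k+\theta}|\hat f_j|.
\]
The last sum is at most $\|f\|_{B_{\mathrm{per}}^{k+\theta}}$. Summing the contributions of both steps gives
\[
\|f\|_{C_b^{k,\theta}}\le C\|f\|_{B_{\mathrm{per}}^{k+\theta}},
\]
with $C$ depending only on $k$ (e.g. $C=k+1+2^{1-\theta}$ for the usual norm $\sum_{m\le k}\|f^{(m)}\|_\infty+[f^{(k)}]_\theta$). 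This is the claimed continuous embedding.

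\textbf{Where care is needed.} There is no serious obstacle. The only points to check are the justification of termwise differentiation in Step 1, which follows from uniform convergence of the differentiated series, and the interpolation bound $\min\{2,a\}\le 2^{1-\theta}a^\theta$ for $a\ge0$. The latter holds by checking the cases $a\le 2$ and $a>2$ separately. Periodicity makes global boundedness on $\mathbb R$ automatic, so no localization is needed.
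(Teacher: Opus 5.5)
Your proposal is correct and follows essentially the paper's route: the paper also sums the termwise bound $|e^{ijx}-e^{ijy}|\le 2^{1-\theta}|j|^\theta|x-y|^\theta$ against $|\hat f_j|$. The paper writes this out only for $k=0$ and leaves the reduction to general $k$ implicit; your Step 1 fills that in, with termwise differentiation justified by $|j|^m\le\langle j\rangle^{k+\theta}$ for $m\le k$.
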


\begin{proof}
It suffices to provide the proof for $k=0$, which consists of an adaptation of the proof in \cite[Proposition 2.6]{CLSSS}. Let $f\in B_{\mathrm{per}}^\theta$. For $x,y\in \mathbb{R}^n$, using that
\[
|e^{ijx}-e^{ijy}|=|e^{ij(x-y)}-1|\le 2^{1-\theta}{|j(x-y)|^\theta}|,
\]
we obtain
\begin{align*}
|g(x)-g(y)|&=\left| \sum_{j\in \mathbb{Z}}(e^{ijx}-e^{ijy})\hat{f}_j\right|
\\
&\le 2^{1-\theta}{|(x-y)|^\theta}|  \sum_{j\in \mathbb{Z}}|j|^\theta|\hat{f}_j|
\\
&\le 2^{1-\theta}{|(x-y)|^\theta}|\|f\|_{B^\theta}.
\end{align*}
This yields the expect embedding.
\end{proof}

We proceed as for \cite[Corollary 2.1]{CLT} to obtain the following consequence of Theorem \ref{thm2}, where
\[
\dot{B}^s:=\{f_{|[0,2\pi]};\; f\in B_{\mathrm{per}}^s\}
\]
is equipped with the norm of $B_{\mathrm{per}}^s$. The following embedding then holds true. 

\begin{corollary}\label{cor1}
Let $k\in \mathbb{N}\cup\{0\}$, $0<\theta<1$ and $p\ge 1$. For all $0<t<k +\frac{\theta}{p}$, $\dot{B}^s\hookrightarrow W^{t,p}(]0,2\pi[)$.
\end{corollary}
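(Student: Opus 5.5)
Throughout we read the statement with $s=k+\theta$, which is the setting of Theorem \ref{thm2}. The plan is to go through Hölder spaces, as in \cite[Corollary 2.1]{CLT}. First, Theorem \ref{thm2} gives $B_{\mathrm{per}}^{k+\theta}\hookrightarrow C_b^{k,\theta}$. Restricting to $[0,2\pi]$, we get for $f\in B^{k+\theta}_{\mathrm{per}}$ and $g=f_{|[0,2\pi]}$ that
\[
\|g\|_{C^{k,\theta}([0,2\pi])}\le C\|f\|_{B^{k+\theta}_{\mathrm{per}}}.
\]
It therefore suffices to prove the classical embedding $C^{k,\theta}([0,2\pi])\hookrightarrow W^{t,p}(]0,2\pi[)$ for $0<t<k+\frac{\theta}{p}$. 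This step uses only the boundedness of the interval.

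We split into two cases according to $t$.

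\emph{Case $t\le k$.} We have $W^{k,p}(]0,2\pi[)\hookrightarrow W^{t,p}(]0,2\pi[)$. Moreover,
\[
\|g\|_{W^{k,p}}\le (2\pi)^{1/p}\sum_{m\le k}\|g^{(m)}\|_{C^0}\le C\|g\|_{C^{k,\theta}},
\]
since all derivatives up to order $k$ are bounded and the interval has finite length.

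\emph{Case $k<t<k+\frac{\theta}{p}$.} Write $t=k+\sigma$ with $0<\sigma<\frac{\theta}{p}<1$. The $W^{k,p}$ part of the norm is bounded as in the first case. It remains to control the Gagliardo seminorm of $g^{(k)}$. Using $|g^{(k)}(x)-g^{(k)}(y)|\le [g^{(k)}]_\theta|x-y|^\theta$, we get
\[
\int_0^{2\pi}\!\!\int_0^{2\pi}\frac{|g^{(k)}(x)-g^{(k)}(y)|^p}{|x-y|^{1+\sigma p}}\,dx\,dy\le [g^{(k)}]_\theta^p\int_0^{2\pi}\!\!\int_0^{2\pi}|x-y|^{(\theta-\sigma)p-1}\,dx\,dy .
\]
The last integral is finite precisely because the exponent satisfies $(\theta-\sigma)p-1>-1$, i.e. $\sigma<\theta$. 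In fact $\sigma<\theta/p\le\theta$, so this holds.

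Combining the two cases yields
\[
\|g\|_{W^{t,p}(]0,2\pi[)}\le C\|f\|_{B^{k+\theta}_{\mathrm{per}}}.
\]
This is the asserted embedding, since $\dot B^{k+\theta}$ carries the norm of $B^{k+\theta}_{\mathrm{per}}$.

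The argument needs only $\sigma<\theta$. The stated restriction $t<k+\theta/p$ is stronger than this and so costs nothing. The main point needing care is the bookkeeping of which definition of $W^{t,p}$ is used, namely the Slobodeckij norm for non-integer $t$. I would check that the constants depend only on $k,\theta,p,t$.
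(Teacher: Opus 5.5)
Your proof is correct and follows the route the paper indicates (deferring to \cite[Corollary 2.1]{CLT}): Theorem \ref{thm2} gives $B^{k+\theta}_{\mathrm{per}}\hookrightarrow C_b^{k,\theta}$, and a H\"older-to-Sobolev--Slobodeckij embedding on the bounded interval finishes the argument. Your reading $s=k+\theta$ is the intended one. Your direct Gagliardo estimate in fact gives the embedding for every $t<k+\theta$, a range that contains the stated $t<k+\frac{\theta}{p}$.
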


\section{Differential equations on $B_{\mathrm{per}}^0$}\label{s3}

In this section, we discuss several differential equations on $B_{\mathrm{per}}^0$. To this end, for all $\kappa \in \mathbb{R}$, we define
\[
\mathbb{C}_\kappa:=\{z\in \mathbb{C};\; \Re z>\kappa\}.
\]

\subsection{The Schr\"odinger operator on $B_{\mathrm{per}}^0$}\label{sb3.1}
We first define the unbounded operator $A_0: B_{\mathrm{per}}^0\rightarrow B_{\mathrm{per}}^0$ by
\[
A_0u=u'',\quad u\in D(A_0)=B_{\mathrm{per}}^2.
\]
The resolvent set of $A_0$ is defined by
\[
\rho(A_0):=\{z\in \mathbb{C};\; z-A_0\colon D(A_0)\rightarrow B^0_\mathrm{per}\; \text{is a bijection and}\; (z-A)^{-1}\in \mathscr{B}(B^0_\mathrm{per})\},
\]
where $\mathscr{B}(B^0_\mathrm{per})$ denotes the space of bounded linear operators on $B^0_\mathrm{per}$.

Let $\lambda\in \mathbb{C}_0$, $f\in B_{\mathrm{per}}^0$, and consider the equation
\begin{equation}\label{Sc1}
\lambda u-u''=f.
\end{equation}
If \eqref{Sc1} admits a solution $u\in B_{\mathrm{per}}^2$, then necessarily there holds
\[
(\lambda +j^2)\hat{u}_j=\hat{f}_j,\quad j\in \mathbb{Z},
\]
and therefore $u$ is explicitly given by
\[
u(x)=\sum_{j\in \mathbb{Z}}\frac{\hat{f}_j}{\lambda+j^2}e^{ijx},\quad x\in \mathbb{R}.
\]
This solution belongs to $B_{\mathrm{per}}^2$ because $\sup_{j\in \mathbb{Z}}\frac{\langle j\rangle^2}{|\lambda+j^2|}<\infty$.  Therefore, we obtain
\[
\|u\|_{B_{\mathrm{per}}^2}\le \|f\|_{B_{\mathrm{per}}^0},\quad \lambda\in \mathbb{C}_1.
\]
In addition, since $|\lambda+j^2|\ge |\lambda|$ for all $\lambda\in \mathbb{C}_0$, we have
\[
\|u\|_{B_{\mathrm{per}}^0}\le \frac{1}{|\lambda|}\|f\|_{B_{\mathrm{per}}^0},\quad \lambda\in \mathbb{C}_0.
\]
In other words, $\mathbb{C}_0\subset \rho(A_0)$ and
\begin{equation}\label{Sc2}
\|(\lambda-A_0)^{-1}\|_{\mathscr{B}(B_{\mathrm{per}}^0)}\le \frac{1}{|\lambda|},\quad \lambda\in \mathbb{C}_0.
\end{equation}

Let $q\in B_{\mathrm{per}}^0$. We have seen before that $B_{\mathrm{per}}^0$ is an algebra for the product of functions. Therefore, we can define the unbounded operator $A_q: B_{\mathrm{per}}^0\rightarrow B_{\mathrm{per}}^0$ by
\[
A_qu=u''+qu,\quad u\in D(A_q)=B_{\mathrm{per}}^2.
\]
Let $\lambda\in \mathbb{C}_0$, $f\in B_{\mathrm{per}}^0$, and $T_q(\lambda): B_{\mathrm{per}}^0\rightarrow B_{\mathrm{per}}^0:u\mapsto (\lambda-A_0)^{-1}(qu)$. Then consider the equation
\begin{equation}\label{Sc3}
u-T_q(\lambda)u=(\lambda-A_0)^{-1}(f).
\end{equation}
If $\lambda_q:=\|q\|_{B_{\mathrm{per}}^0}$, then we have $\|T_q(\lambda)\|_{\mathscr{B}(B_{\mathrm{per}}^0)}\le \frac{\lambda_q}{|\lambda|}$. In particular, $\|T_q(\lambda)\|_{\mathscr{B}(B_{\mathrm{per}}^0)}<1$ for all $\lambda\in \mathbb{C}_{\lambda_q}$. Consequently, \eqref{Sc3} admits a unique solution provided that $\lambda\in \mathbb{C}_{\lambda_q}$. In addition, $u=T_q(\lambda)u+(\lambda-A_0)^{-1}(f)\in B_{\mathrm{per}}^2$ and
\[
\|u\|_{B_{\mathrm{per}}^0}=\|(1-T_q(\lambda))^{-1}(\lambda-A_0)^{-1}(f)\|_{B_{\mathrm{per}}^0}\le \frac{1}{|\lambda|-\lambda_q}\|f\|_{B_{\mathrm{per}}^0}.
\]
In particular, for all $\lambda\in \mathbb{C}_{2\lambda_q}$ it holds that
\begin{equation}\label{Sc4}
\|u\|_{B_{\mathrm{per}}^0}=\|(1-T_q(\lambda))^{-1}(\lambda-A_0)^{-1}(f)\|_{B_{\mathrm{per}}^0}\le \frac{2}{|\lambda|}\|f\|_{B_{\mathrm{per}}^0}.
\end{equation}
As a solution of \eqref{Sc3} is also a solution of the equation
\[
(\lambda-A_q) u=f,
\]
we obtain that $\rho(A_q)\supset \mathbb{C}_{2\lambda_q}$, and from \eqref{Sc4} we have
\[
\|(\lambda -A_q)^{-1}\|_{\mathscr{B}(B_{\mathrm{per}}^0)}\le \frac{2}{|\lambda|},\quad \lambda \in 
\mathbb{C}_{2\lambda_q} .
\]
Hence, according to \cite[Proposition 2.1.11]{Lun}, $A_q$ is a sectorial operator, and hence it is the generator of an analytic semigroup $(e^{tA_q})_{t\ge 0}$. Since $A_q$ has a dense domain in $B^0_\mathrm{per}$, this semigroup is strongly continuous.

\begin{remark}\label{rem2}
{\rm
With a slight modification, the analysis we carried out for the Schr\"odinger operator $A_q$ can be adapted to the following nonlocal operator:
\[
S_K u=u''+K\ast u,\quad u\in D(S_K):=B_{\mathrm{per}}^2,
\]
where $K\in L^1_{\mathrm{per}}$. In this case, $\lambda_q$ and $T_q$ have to be replaced respectively by $\lambda_K:=\|K\|_{L^1_{\mathrm{per}}}$ and
\[
T_K:B_{\mathrm{per}}^0\rightarrow B_{\mathrm{per}}^0:u\mapsto T_K(u):=(\lambda -A_0)^{-1}(K\ast u).
\]
As for the Schr\"odinger operator $A_q$, we verify that $\rho(S_K)\supset \mathbb{C}_{2\lambda_K}$, and we have
\[
\|(\lambda-S_K)^{-1}\|_{\mathscr{B}(B_{\mathrm{per}}^0)}\le \frac{2}{|\lambda|},\quad \lambda \in \mathbb{C}_{2\lambda_K}.
\]
Therefore, again according to \cite[Proposition 2.1.11]{Lun}, $S_K$ is a sectorial operator.
}
\end{remark}

\subsection{Fractional powers of $-A_0$} \label{sb3.2}

There is a natural way to define $(-A_0)^{\alpha}$ for $0<\alpha \le 1$. The formula
\[
-A_0(u)(x)=\sum_{j\in \mathbb{Z}}j^2\hat{u}_je^{ijx},\quad x\in \mathbb{R}^n,\; u\in B_{\mathrm{per}}^2,
\]
suggests defining $(-A_0)^\alpha: B_{\mathrm{per}}^0\rightarrow B_{\mathrm{per}}^0$ as the unbounded operator given by
\[
(-A_0)^{\alpha}(u)(x):=\sum_{j\in \mathbb{Z}}j^{2\alpha}\hat{u}_je^{ijx},\quad x\in \mathbb{R}^n,\; u\in D((-A_0)^{\alpha}) :=B_{\mathrm{per}}^{2\alpha}.
\]

For $\lambda\in \mathbb{C}_0$ and $f\in B_{\mathrm{per}}^0$, we consider the equation
\begin{equation}\label{Sc5}
\lambda u + (-A_0)^{\alpha}u=f.
\end{equation}
As above, if $u\in B_{\mathrm{per}}^{2\alpha}$ is a solution of the above equation, then necessarily there holds
\[
(\lambda +j^{2\alpha})\hat{u}_j=\hat{f}_j,\quad j\in \mathbb{Z}.
\]
Consequently, \eqref{Sc5} admits a unique solution $u\in B_{\mathrm{per}}^{2\alpha}$ given by
\[
u(x)=\sum_{j\in \mathbb{Z}}\frac{1}{\lambda +j^{2\alpha}}\hat{f}_je^{ijx},\quad x\in \mathbb{R},
\]
from which we obtain
\[
\|u\|_{B_{\mathrm{per}}^0}\le \frac{1}{|\lambda|}\|f\|_{B_{\mathrm{per}}^0}.
\]
In other words, $\rho(-(-A_0)^{\alpha})\supset \mathbb{C}_0$ and
\[
\|(\lambda+(-A_0)^{\alpha})^{-1}\|_{\mathscr{B}(B_{\mathrm{per}}^0)}\le \frac{1}{|\lambda|},\quad \lambda\in \mathbb{C}_0.
\]
Again, we deduce from \cite[Proposition 2.1.11]{Lun} that $-(-A_0)^{\alpha}$ is a sectorial operator. Note that $-(-A_0)^1=A_0$.

\subsection{A nonlinear differential equation}\label{sb3.3}

In this subsection, we adapt the arguments used in Subsection \ref{sb3.1} to study a nonlinear differential equation. Let $k\in \mathbb{N}\setminus\{1\}$ and $q, f\in B_{\mathrm{per}}^0$ with $q\ne 0$. Consider the following nonlinear equation
\begin{equation}\label{spe1}
\lambda u-u''=qu^k+f.
\end{equation}
For $\lambda \in \mathbb{C}_0\subset \rho(A_0)$, if $u\in B_{\mathrm{per}}^2$ is a solution of \eqref{spe1}, then $u$ is a fixed point of the mapping
\[
N: B_{\mathrm{per}}^0\rightarrow B_{\mathrm{per}}^0:w\mapsto N(w)=(\lambda-A_0)^{-1}(qw^k+f).
\]
Let $w_1,w_2\in B_{\mathrm{per}}^0$. From the results of Subsection \ref{sb3.1}, we obtain
\begin{equation}\label{spe2}
\|N(w_1)-N(w_2)\|_{B_{\mathrm{per}}^0}\le \frac{\|q\|_{B_{\mathrm{per}}^0}}{|\lambda|}\|w_1^k-w_2^k\|_{B_{\mathrm{per}}^0}.
\end{equation}
Using the formula
\[
w_1^k-w_2^k=(w_1-w_2)\sum_{m=0}^{k-1}w_1^m w_2^{k-1-m},
\]
we get
\[
\|w_1^k-w_2^k\|_{B_{\mathrm{per}}^0}\le \|w_1-w_2\|_{B_{\mathrm{per}}^0}\sum_{m=0}^{k-1}\|w_1\|_{B_{\mathrm{per}}^0}^m\|w_2\|^{k-1-m}_{B_{\mathrm{per}}^0}.
\]
Using this inequality in \eqref{spe2} yields
\[
\|N(w_1)-N(w_2)\|_{B_{\mathrm{per}}^0}\le \frac{\|q\|_{B_{\mathrm{per}}^0}}{|\lambda|}\|w_1-w_2\|_{B_{\mathrm{per}}^0}\sum_{m=0}^{k-1}\|w_1\|_{B_{\mathrm{per}}^0}^m\|w_2\|^{k-1-m}_{B_{\mathrm{per}}^0}.
\]

Let $\mathbf{B}_\rho$ be the ball centered at the origin with radius $\rho>0$. Then we have
\begin{align*}
&\|N(w)\|_{B_{\mathrm{per}}^0}\le \frac{1}{|\lambda|}\left(\|q\|_{B_{\mathrm{per}}^0}\rho^k+\|f\|_{B_{\mathrm{per}}^0}\right),\quad w\in \mathbf{B}_\rho,
\\
&\|N(w_1)-N(w_2)\|_{B_{\mathrm{per}}^0}\le \frac{\|q\|_{B_{\mathrm{per}}^0}k\rho^{k-1}}{|\lambda|}\|w_1-w_2\|_{B_{\mathrm{per}}^0},\quad w_1,w_2\in \mathbf{B}_\rho.
\end{align*}
Consequently, under the following conditions
\begin{align}
&\|q\|_{B_{\mathrm{per}}^0}\rho^k+\|f\|_{B_{\mathrm{per}}^0}\le \rho |\lambda|,\label{spe3}
\\
&\|q\|_{B_{\mathrm{per}}^0}k\rho^{k-1}<|\lambda|,\label{spe4}
\end{align}
$N:\mathbf{B}_\rho\rightarrow \mathbf{B}_\rho$ is a contraction. Therefore, it admits a unique fixed point $u\in \mathbf{B}_\rho\cap B_{\mathrm{per}}^2$. This fixed point is the unique solution of \eqref{spe1} belonging to $\mathbf{B}_\rho\cap B_{\mathrm{per}}^2$.

For instance, if we choose
\begin{equation}\label{spe5}
0<\rho<\left(\frac{|\lambda|}{2k\|q\|_{B_{\mathrm{per}}^0}}\right)^{\frac{1}{k-1}}
\quad\text{and}\quad
\|f\|_{B_{\mathrm{per}}^0}\le\left(1-\frac{1}{2k}\right)\rho |\lambda|,
\end{equation}
then one verifies that \eqref{spe3} and \eqref{spe4} hold. In particular, conditions \eqref{spe5} are satisfied whenever $\rho$, $q$, and $f$ are given and $|\lambda|$ is chosen sufficiently large. Of course, other choices are possible.

\begin{theorem}\label{thmnl}
Let $\rho>0$, $\lambda \in \mathbb{C}_0$, and $q, f\in B_{\mathrm{per}}^0$ be such that \eqref{spe3} and \eqref{spe4} hold. Then equation \eqref{spe1} admits a unique solution $u\in \mathbf{B}_\rho\cap B_{\mathrm{per}}^2$.
\end{theorem}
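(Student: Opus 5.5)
The plan is to recast \eqref{spe1} as a fixed point problem for the map $N(w)=(\lambda-A_0)^{-1}(qw^k+f)$ on the closed ball $\mathbf{B}_\rho$ of $B_{\mathrm{per}}^0$, and then apply the Banach fixed point theorem. The ball $\mathbf{B}_\rho$ is a complete metric space, being a closed subset of the Banach space $B_{\mathrm{per}}^0$.

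First, $N$ is well defined from $B_{\mathrm{per}}^0$ into $B_{\mathrm{per}}^2$. Indeed, $B_{\mathrm{per}}^0$ is an algebra by \eqref{pr1}, so $qw^k+f\in B_{\mathrm{per}}^0$ and
\[
\|qw^k\|_{B_{\mathrm{per}}^0}\le \|q\|_{B_{\mathrm{per}}^0}\|w\|_{B_{\mathrm{per}}^0}^k .
\]
Moreover, Subsection \ref{sb3.1} shows that for $\lambda\in\mathbb{C}_0$ the resolvent $(\lambda-A_0)^{-1}$ maps $B_{\mathrm{per}}^0$ into $D(A_0)=B_{\mathrm{per}}^2$, with operator norm on $B_{\mathrm{per}}^0$ at most $1/|\lambda|$ by \eqref{Sc2}.

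With this, the two estimates displayed before the theorem follow. The first is the self-map bound $\|N(w)\|\le(\|q\|\rho^k+\|f\|)/|\lambda|$. The second is the Lipschitz bound with constant $\|q\|k\rho^{k-1}/|\lambda|$, obtained from the factorization of $w_1^k-w_2^k$ and \eqref{pr1}. Condition \eqref{spe3} gives $N(\mathbf{B}_\rho)\subset\mathbf{B}_\rho$, and \eqref{spe4} makes $N$ a strict contraction on $\mathbf{B}_\rho$. Banach's theorem then yields a unique fixed point $u\in\mathbf{B}_\rho$.

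It remains to check that fixed points of $N$ in $\mathbf{B}_\rho$ are exactly the solutions of \eqref{spe1} in $\mathbf{B}_\rho\cap B_{\mathrm{per}}^2$.
\begin{itemize}
\item Existence: $u=N(u)$ lies in the range of $(\lambda-A_0)^{-1}$, so $u\in B_{\mathrm{per}}^2$. Applying $\lambda-A_0$ gives $\lambda u-u''=qu^k+f$, since $A_0u=u''$ on $B_{\mathrm{per}}^2$.
\item Uniqueness: if $v\in\mathbf{B}_\rho\cap B_{\mathrm{per}}^2$ solves \eqref{spe1}, then $(\lambda-A_0)v=qv^k+f$. Injectivity of $\lambda-A_0$ on $B_{\mathrm{per}}^2$ (from $\lambda\in\rho(A_0)$) gives $v=N(v)$, so $v=u$ by uniqueness of the fixed point.
\end{itemize}

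The only delicate point is the Lipschitz bound, namely controlling $\sum_{m=0}^{k-1}\|w_1\|^m\|w_2\|^{k-1-m}\le k\rho^{k-1}$ on the ball. The rest is the standard contraction argument.
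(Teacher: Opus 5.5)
Your proposal is correct and follows the paper's own argument: the fixed-point map $N(w)=(\lambda-A_0)^{-1}(qw^k+f)$ on the ball $\mathbf{B}_\rho$, the self-map and Lipschitz bounds obtained from \eqref{pr1}, \eqref{Sc2} and the factorization of $w_1^k-w_2^k$, and Banach's fixed point theorem under \eqref{spe3}--\eqref{spe4}. You also make explicit two details the paper leaves implicit: the ball must be taken closed for completeness, and fixed points in $\mathbf{B}_\rho$ coincide with solutions in $\mathbf{B}_\rho\cap B_{\mathrm{per}}^2$ by bijectivity of $\lambda-A_0\colon B_{\mathrm{per}}^2\to B_{\mathrm{per}}^0$.
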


\subsection{Linear differential equation with variable coefficients}\label{sb3.4}
For $\lambda \in \mathbb{C}_0$, $a\in B_{\mathrm{per}}^1$, and $f\in B_{\mathrm{per}}^0$, we consider the differential equation
\begin{equation}\label{vc1}
\lambda u-(au')'=f.
\end{equation}
Let us first explain why it is difficult to solve this equation using the method employed for \eqref{Sc1}. If we proceed in a manner analogous to that used for \eqref{Sc1}, we obtain the following infinite system of equations:
\[
\lambda\hat{u}_j+\sum_{k\in \mathbb{Z}}(k^2+k(j-k))\hat{u}_k\hat{a}_{j-k}=\hat{f}_j,\quad j\in \mathbb{Z}.
\]
It therefore appears quite difficult to solve this infinite system of equations. To circumvent this difficulty, as in the case of the nonlinear equation treated in the previous subsection, we will use a perturbation argument. Let $a_0>0$ be fixed. We rewrite \eqref{vc1} in the following form
\[
\lambda u-a_0 u''= ((a-a_0)u')'+f.
\]
Equivalently, one can derive
\[
\frac{\lambda}{a_0} u-u''= \left(\left(\frac{a}{a_0}-1\right)u'\right)'+\frac{f}{a_0}.
\]
Replacing $\frac{\lambda}{a_0}$, $\frac{a}{a_0}$, and $\frac{f}{a_0}$ by $\lambda$, $a$, and $f$, we may assume, without loss of generality, that $a_0=1$. Therefore, the following equation then holds true
\begin{equation}\label{vc2}
\lambda u-u''= ((a-1)u')'+f.
\end{equation}
We reformulate this equation as
\begin{equation}\label{vc3}
u-T(u)= (\lambda-A_0)^{-1}f,
\end{equation}
where
\[
T: B_{\mathrm{per}}^2\rightarrow  B_{\mathrm{per}}^2: u\mapsto T(u):=(\lambda-A_0)^{-1}[((a-1)u')'].
\]
We verify that $u\in B_{\mathrm{per}}^2$ is a solution of \eqref{vc2} if and only if it is a solution of \eqref{vc3}. Since
\[
\|(\lambda-A_0)^{-1}\|_{\mathscr{B}(B_{\mathrm{per}}^0,B_{\mathrm{per}}^2)}\le 1, \quad \lambda \in \mathbb{C}_1,
\]
we obtain from \eqref{pr2}
\begin{align*}
\|Tu\|_{B_{\mathrm{per}}^2}&\le \|((a-1)u')'\|_{B_{\mathrm{per}}^0}
\\
&\le 2 \|a-1\|_{B_{\mathrm{per}}^1}\|u\|_{B_{\mathrm{per}}^2}.
\end{align*}
Consequently, under the assumptions $\lambda \in \mathbb{C}_1$ and $2\|a-1\|_{B_{\mathrm{per}}^1}\le \delta$ for some $0<\delta<1$ , the equation \eqref{vc3} admits a unique solution $u=(1-T)^{-1}(\lambda-A_0)^{-1}f$ and
\[
\|u\|_{B_{\mathrm{per}}^2}\le \frac{1}{1-\delta}\|f\|_{B_{\mathrm{per}}^0}.
\]

Let $c=(c_j)\in \ell^{1,1}$ be such that 
$\|c\|_{\ell^{1,1}}\le \delta/2$ for some $0<\delta<1$. Then the function $a$ defined by
\[
a(x)=1+\sum_{j\in \mathbb{Z}}c_je^{ijx},\quad x\in \mathbb{R},
\]
belongs to $B_{\mathrm{per}}^1$ and satisfies $2\|a-1\|_{B_{\mathrm{per}}^1}\le \delta$.

We are now in a position to state the following result.

\begin{theorem}\label{thmvc}
Let $0<\delta <1$. For all $\lambda\in \mathbb{C}_1$, $f\in B_{\mathrm{per}}^0$, and $a\in B_{\mathrm{per}}^1$ such that $2\|a-1\|_{B_{\mathrm{per}}^1}\le \delta$, equation \eqref{vc1} has a unique solution $u\in B_{\mathrm{per}}^2$ satisfying
\[
\|u\|_{B_{\mathrm{per}}^2}\le \frac{1}{1-\delta}\|f\|_{B_{\mathrm{per}}^0}.
\]
\end{theorem}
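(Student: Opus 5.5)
The plan is a Neumann-series (contraction) argument in $B_{\mathrm{per}}^2$, following the discussion that precedes Theorem \ref{thmvc}. Since the statement already assumes the normalization $a_0=1$, we work with \eqref{vc2}, which is exactly \eqref{vc1} rewritten as $\lambda u-u''=((a-1)u')'+f$, and then pass to its equivalent fixed-point form \eqref{vc3}, $u-T(u)=(\lambda-A_0)^{-1}f$. The proof will proceed in three steps.

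\emph{Step 1: resolvent bound into $B_{\mathrm{per}}^2$.} For $\lambda\in\mathbb{C}_1$ we need $\|(\lambda-A_0)^{-1}\|_{\mathscr{B}(B^0_{\mathrm{per}},B^2_{\mathrm{per}})}\le 1$. Using the explicit multiplier from Subsection \ref{sb3.1}, this amounts to showing $\langle j\rangle^2\le|\lambda+j^2|$. This holds because $|\lambda+j^2|\ge\Re\lambda+j^2>1+j^2$.

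\emph{Step 2: $T$ is a strict contraction on $B_{\mathrm{per}}^2$.} For $u\in B_{\mathrm{per}}^2$, set $w=(a-1)u'$. We have $u'\in B^1_{\mathrm{per}}$ with $\|u'\|_{B^1_{\mathrm{per}}}\le\|u\|_{B^2_{\mathrm{per}}}$, since $|j|\langle j\rangle\le\langle j\rangle^2$. The product estimate \eqref{pr2} with $s=1$ then gives
\[
\|w\|_{B^1_{\mathrm{per}}}\le 2^{1/2}\|a-1\|_{B^1_{\mathrm{per}}}\|u\|_{B^2_{\mathrm{per}}}.
\]
Next, $\|w'\|_{B^0_{\mathrm{per}}}\le\|w\|_{B^1_{\mathrm{per}}}$ by \eqref{df}. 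Combining this with Step 1 yields
\[
\|Tu\|_{B^2_{\mathrm{per}}}\le\sqrt2\,\|a-1\|_{B^1_{\mathrm{per}}}\|u\|_{B^2_{\mathrm{per}}}\le \delta\|u\|_{B^2_{\mathrm{per}}},
\]
since $\sqrt2\le 2$. In particular $T$ maps $B^2_{\mathrm{per}}$ into itself and $\|T\|<1$.

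\emph{Step 3: existence, uniqueness and the estimate.} Because $\|T\|\le\delta<1$, the operator $1-T$ is invertible on $B^2_{\mathrm{per}}$ via a Neumann series, with $\|(1-T)^{-1}\|\le(1-\delta)^{-1}$. Set $u=(1-T)^{-1}(\lambda-A_0)^{-1}f$. Step 1 then gives $\|u\|_{B^2_{\mathrm{per}}}\le(1-\delta)^{-1}\|f\|_{B^0_{\mathrm{per}}}$.

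It remains to confirm that $u\in B^2_{\mathrm{per}}$ solves \eqref{vc1} if and only if it solves \eqref{vc3}. Apply $\lambda-A_0$, which is a bijection from $B^2_{\mathrm{per}}$ onto $B^0_{\mathrm{per}}$ for $\lambda\in\mathbb{C}_0$. The only thing to check is that $((a-1)u')'\in B^0_{\mathrm{per}}$ is genuinely the classical derivative, which is what \eqref{df} guarantees. Uniqueness in $B^2_{\mathrm{per}}$ then follows from the injectivity of $1-T$.

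The main obstacle is Step 2. There we must make sure the derivative of the product is controlled in $B^0_{\mathrm{per}}$ by the $B^1_{\mathrm{per}}$ norm of $a-1$ times the $B^2_{\mathrm{per}}$ norm of $u$, which requires the weighted convolution estimate \eqref{cn2} rather than the Leibniz rule. If a sharper route is needed, one can expand $((a-1)u')'=a'u'+(a-1)u''$ and use \eqref{pr1} on each term, giving the constant $\|a'\|_{B^0}+\|a-1\|_{B^0}\le 2\|a-1\|_{B^1}$. This reproduces exactly the factor $2$ that appears in the hypothesis $2\|a-1\|_{B_{\mathrm{per}}^1}\le\delta$.
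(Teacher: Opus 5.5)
Your proposal is correct and follows the paper's argument. You use the same perturbation operator $T=(\lambda-A_0)^{-1}[((a-1)u')']$ on $B_{\mathrm{per}}^2$, the bound $\|(\lambda-A_0)^{-1}\|_{\mathscr{B}(B_{\mathrm{per}}^0,B_{\mathrm{per}}^2)}\le 1$ for $\lambda\in\mathbb{C}_1$, the product estimate \eqref{pr2} and a Neumann series, and you spell out the equivalence of \eqref{vc1} and \eqref{vc3} a little more than the paper does. One wording slip: your main route via \eqref{pr2} with $s=1$ already gives the constant $\sqrt{2}$, so the Leibniz-rule variant in your last paragraph (constant $2$, matching the paper) is the less sharp route, not the sharper one; both suffice.
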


\section{Spectral Barron spaces on locally compact Abelian groups and high dimensional extension}\label{s4}

In this section, we discuss the extension of the theory of spectral Barron spaces on $\mathbb{R}^n$, which we studied in \cite{CLT}, and on $\mathbb{T}$, which we addressed in Section \ref{s2}. Since the setting is very general, the results obtained are less rich than those established for $\mathbb{R}^n$ and $\mathbb{T}$. In the case of $\mathbb{T}^n$, the $n$-dimensional torus, we show that the results coincide with those established for $\mathbb{T}$.

For the reader's convenience, we use the same notation as in Folland's monograph \cite{Fo}.

\subsection{Preliminaries on locally compact Abelian groups}\label{sb4.1}

For simplicity and without any restriction, we consider multiplicative groups, and we recall that a topological group is called locally compact group if its topology is Hausdorff and locally compact. 
In the remainder part of this section, unless otherwise stated, $G$ is  a locally compact Abelian group. The identity element of $G$ will be denoted as $e$.

A continuous homomorphism $\xi$ of $G$ into the unit circle $\mathbb{T}$ is called (unitary) character of $G$. We denote by $\hat{G}$ the set of characters of $G$, and we write
\[
\langle x,\xi\rangle:=\xi (x),\quad x\in G,\; \xi \in \hat{G}.
\]
For further use, we recall that there holds
\[
\langle xy,\xi\rangle=\langle x,\xi\rangle\langle y,\xi\rangle,\;
\langle e,\xi\rangle =1,\; \text{and}\; \langle x^{-1},\xi\rangle=\overline{\langle x,\xi\rangle}.
\]
Notice that $\hat{G}$ is usually endowed with the topology of compact convergence on $G$, under which the group operations are continuous. This topology coincides with the weak-$\ast$ topology on $L^\infty (G)$, inherits as a subset of $L^\infty(G)$. Equipped with the above mentioned topology, $\hat{G}$ is a locally compact Abelian group, we call it the dual group of $G$. From \cite[Theorem 4.6 and Corollary 4.8]{Fo}, $\widehat{\mathbb{R}^n}\cong \mathbb{R}^n$ with pairing $\langle x,\xi\rangle=e^{2i\pi x\cdot \xi}$, $x,\xi\in \mathbb{R}^n$, and $\hat{\mathbb{T}}\cong \mathbb{Z}$, with pairing $\langle t,j\rangle=t^j$, $t\in \mathbb{T}$, $j\in \mathbb{Z}$. We further note that that it is customary to identify $\doublehat{G}$ with $G$.

We recall that a Haar measure on $G$ is a nonzero nonnegative Radon measure $\mu$ on $G$ such that for all $x\in G$ and a Borel set $E$ of $G$ we have $\mu (xE) (=\mu (Ex)) =\mu (E)$. From \cite[Theorem 2.10]{Fo}, $G$ admits a Haar measure $\mu$ and, by \cite[Theorem 2.20]{Fo}, if $\mu'$ is another Haar measure on $G$, then $\mu'=c\mu$, for some constant $c>0$.

In the following, $\mu$ will denote a Haar measure on $G$. For $1\le p\le\infty$,  we set $L^p(G):=L^p(G,d\mu(x))$.
For $f\in L^1(G)$, its Fourier transform is defined as follows
\[
\mathscr{F}f(\xi)=\hat{f}(\xi)=\int_Gf(x)\overline{\langle x,\xi\rangle}d\mu(x),\quad \xi\in \hat{G}.
\]
Before giving a Fourier inversion formula, we need to introduce new definitions. A function $\phi\in L^\infty(G)$ is said to be of a positive type if
\[
\int_G\int_G\phi(y^{-1}x)f(x)\overline{f(y)}d\mu(x)d\mu(y)\ge 0,\quad f\in L^1(G).
\]
For $X\in \{G,\hat{G}\}$, let 
\begin{align*}
&C(X):=\{ f:X\rightarrow \mathbb{C};\; f\; \mbox{is continuous}\},
\\
&C_{\mathrm{b}}(X)= C(X)\cap L^\infty(X) \; \mbox{(equipped with the sup-norm)},
\\
&C_{\mathrm{c}}(X):=\{f\in C(X),\; \mathrm{supp}(f)\; \mbox{is compact}\},
\\
&C_0(X)=[\mbox{closure of}\; C_{\mathrm{c}}(X)\; \mbox{in}\; C_{\mathrm{b}}(X)]=\{f\in C(X);\; f\; \mbox{vanishes at}\; \infty\}
\end{align*}
and set
\[
\mathcal{P}(G):=\{\phi \in C(G);\; \phi\; \mbox{is of positive type}\}.
\]
In the following, the space of bounded complex Radon measures on $\hat{G}$ will be denoted by $M(\hat{G})$, that is the topological dual of $C_0(\hat{G})$. By \cite[Proposition 4.18]{Fo}, the mapping
\[
\lambda \in M(\hat{G})\mapsto \phi_\lambda \in C_b(G);\; \phi_\lambda(x):=\int_{\hat{G}}\langle \xi,x\rangle d\lambda(\xi)
\]
is a linear injection, and
\[
\|\phi_\lambda\|_{C_b(G)}\le \|\lambda\|_{M(\hat{G})}:=|\lambda|(\hat{G}).
\]
According to Bochner's theorem (e.g. \cite[Theorem 4.19]{Fo}), if $\phi\in \mathcal{P}(G)$, then there exists a unique positive $\lambda \in M(\hat{G})$ such that $\phi=\phi_\lambda$.

Let
\begin{align*}
&\mathcal{B}(G):=\{\phi_\lambda;\; \lambda\in M(\hat{G})\}\subset C_b(G),
\\
&\mathcal{B}^p(G):=\mathcal{B}(G)\cap L^p(G),\quad 1\le p<\infty.
\end{align*}
Note that $\mathcal{B}(G)$ is the linear span of $\mathcal{P}(G)$. 
By the Fourier inversion theorem in \cite[Theorem 4.22]{Fo}, if $f\in \mathcal{B}^1(G)$, then $\hat{f}\in L^1(\hat{G})$. Furthermore, there exists a Haar measure $\hat{\mu}$ on $\hat{G}$ such that we have for all $f\in \mathcal{B}^1(G)$ 
\begin{equation}\label{lcg1}
f(x)=\overline{\mathscr{F}}(\hat{f})(x):=\int_{\hat{G}}\hat{f}(\xi)\langle x,\xi\rangle d\hat{\mu}(\xi).
\end{equation}
We call $\overline{\mathscr{F}}$ the Fourier cotransform.

It should be noted that, according to \cite[Theorem 4.33]{Fo}, the formula \eqref{lcg1} holds for all $f\in L^1(G)$ such that $\hat{f}\in L^1(\hat{G})$, and by \cite[Lemma 4.30]{Fo}, $\mathscr{F}(\mathcal{B}^1(G))$ is dense in $L^1(\hat{G})$.

\subsection{Definitions}\label{sb4.2}

We are now able to define the spectral Barron space $B^0(G)$ by
\[
B^0(G):=\{f\in L^1(G);\; \hat{f}\in L^1(\hat{G})\}.
\]
Similarly, we define 
\[
B^0(\hat{G}):=\{f\in L^1(\hat{G});\; \overline{\mathscr{F}}(f)\in L^1(G)\}.
\]
We will concentrate our study to $B^0(G)$.

As noted in the remarks following \cite[Proposition 10.17]{GQ}, the restriction of $\mathscr{F}$ to $B^0(G)$ is a bijection onto $B^0(\hat{G})$; its inverse is the restriction of $\overline{\mathscr{F}}$ to $B^0(\hat{G})$.

Equipped with the norm
\[
\|f\|_{B^0(G)}:=\|\hat{f}\|_{L^1(\hat{G})},
\]
$B^0(G)$ is a Banach space, and by \eqref{lcg1} we have $B^0(G)\hookrightarrow C_{\mathrm{b}}(G)$ with
\[
\|f\|_{C_{\mathrm{b}}(G)}\le \|f\|_{B^0(G)} .
\]

Let  $\Sigma :=C(\hat{G},[1,\infty))$. For $\sigma \in \Sigma$ and $s\ge 0$, define 
\[
B_\sigma ^s(G):=\{f\in L^1(G);\; \sigma^s\hat{f}\in L^1(\hat{G})\}.
\]
We verify that $B_\sigma ^s(G)$ endowed with norm
\[
\|f\|_{B_\sigma ^s(G)}:=\|\sigma^s\hat{f}\|_{L^1(\hat{G})}
\]
is a Banach space. Furthermore, we have obviously
\begin{align*}
&B_\sigma ^0(G)=B^0(G),\quad \sigma \in \Sigma,
\\
&B_{\sigma_1}^{s_1}\hookrightarrow B_{\sigma_2}^{s_2},\quad 0\le s_2\le s_1,\; \sigma_1,\sigma_2\in \Sigma ;\; \sigma_2\le \sigma_1.
\end{align*}

It should be noted that the spaces $B_\sigma^s(G)$ differ slightly from the spectral Barron spaces $B^s$ introduced in \cite{CLT} when $G=\mathbb{R}^n$ and $\sigma(\xi)=\sqrt{1+|\xi|^2}$ ($\xi \in \mathbb{R}^n$), but coincide with the spaces $B^s(\mathbb{T})$ when $G=\mathbb{T}$ and $\sigma(j)=\sqrt{1+j^2}$ ($j \in \mathbb{Z}$).

Let $\sigma \in \Sigma$ and $0\le r<s<t$. Then $s=(1-\alpha) r+\alpha t$ with $\alpha=\frac{s-r}{t-r}$. For $f\in B_\sigma^t(G)$, in light of the identity
\[
\sigma^s|\hat{f}|=[\sigma^{(1-\alpha) r}|\hat{f}|^{1-\alpha}][\sigma^{\alpha t}|\hat{f}|^\alpha],
\]
applying H\"older's inequality, we obtain
\[
\|f\|_{B_\sigma ^s(G)}\le \|f\|_{B_\sigma ^r(G)}^{1-\alpha}\|f\|_{B_\sigma ^t(G)}^\alpha.
\]

\subsection{Convolution}\label{sb4.3}

The convolution of $f,g\in L^1(G)$ is defined as follows:
\[
(f\ast g)(x)=\int_Gf(y)g(y^{-1}x)d\mu(y).
\]
Note that as $G$ is Abelian, we have $f\ast g=g\ast f$.

Replacing $G$ and $\mu$ respectively by $\hat{G}$ and $\hat{\mu}$, we get the definition of the convolution on $L^1(\hat{G})$. In particular, we have for $f,g\in B_\sigma^s(G)$
\[
(\hat{f}\ast \hat{g})(\xi )=\int_{\hat{G}}\hat{f}(\eta)\hat{g}(\eta^{-1}\xi)d\hat{\mu}(\eta),
\]
and then
\begin{equation}\label{lcg5}
\sigma(\xi)^{s}|(\hat{f}\ast \hat{g})(\xi )|\le \int_{\hat{G}}\sigma(\xi)^{s}|\hat{f}(\eta)\hat{g}(\eta^{-1}\xi)|d\hat{\mu}(\eta).
\end{equation}
We assume that $\sigma$ satisfies in addition the following Peetre type inequality, where $\mathbf{c}_s>0$ is a constant,
\[
\sigma(\xi)^{s}\le \mathbf{c}_s\sigma(\eta)^s\sigma(\eta^{-1}\xi)^s,\quad \eta,\; \xi \in \hat{G}.
\]
Under this new assmption, \eqref{lcg5} implies
\begin{align}
\int_{\hat{G}}\sigma(\xi)^{s}|(\hat{f}\ast \hat{g})(\xi )|&d\hat{\mu}(\xi)\le \int_{\hat{G}}\int_{\hat{G}}\sigma(\xi)^{s}|\hat{f}(\eta)||\hat{g}(\eta^{-1}\xi)|d\hat{\mu}(\eta)d\hat{\mu}(\xi)\label{lcg6}
\\
&\le \mathbf{c}_s\int_{\hat{G}}\sigma(\eta)^{s}|\hat{f}(\eta)|d\hat{\mu}(\eta)\int_{\hat{G}}\sigma(\eta^{-1}\xi)^s|\hat{g}(\eta^{-1}\xi)|d\hat{\mu}(\xi)\nonumber
\\
&\le \mathbf{c}_s\int_{\hat{G}}\sigma(\eta)^{s}|\hat{f}(\eta)|d\hat{\mu}(\eta)\int_{\hat{G}}\sigma(\xi)^s|\hat{g}(\xi)|d\hat{\mu}(\xi).\nonumber
\end{align}
On the other hand, applying Fubini's theorem, we obtain
\begin{align*}
\widehat{fg}(\xi)&=\int_Gf(x)g(x)\overline{\langle x,\xi\rangle}d\mu(x)
\\
&=\int_Gf(x)\int_{\hat{G}}\hat{g}(\eta)\langle x,\eta \rangle \overline{\langle x,\xi\rangle} d\hat{\mu}(\eta)d\mu(x)
\\
&=\int_{\hat{G}}\hat{g}(\eta)\left(\int_Gf(x)\overline{\langle x,\eta^{-1}\xi\rangle}d\mu(x)\right)d\hat{\mu}(\eta),
\end{align*}
where we used that $\eta^{-1}(x)=\overline{\eta (x)}$ (e.g. \cite[Proposition 10.3]{GQ}). Hence,
\[
\widehat{fg}(\xi)=\int_{\hat{G}}\hat{g}(\eta)\hat{f}(\eta^{-1}\xi)d\hat{\mu}(\eta)=(\hat{f}\ast \hat{g})(\xi).
\]
This, combined with \eqref{lcg6}, gives $fg\in B_\sigma^s$ and
\[
\|fg\|_{B_\sigma^s}\le \mathbf{c}_s\|g\|_{B_\sigma^s}\|g\|_{B_\sigma^s}.
\]

Let $f\in B_\sigma ^s$  and $g\in L^1(G)$. Using the formula
\[
\langle x,\xi\rangle=\langle y,\xi\rangle\langle y^{-1}x,\xi\rangle,\quad x,y\in G,\; \xi \in \hat{G},
\]
we obtain 
\begin{align*}
\widehat{f\ast g}(\xi)&=\int_G \int_G \overline{\langle x,\xi\rangle}f(y)g(y^{-1}x)d\mu(y)d\mu(x)
\\
&=\int_G \overline{\langle y,\xi\rangle}f(y)\left(\int_G \overline{\langle y^{-1}x,\xi\rangle}g(y^{-1}x)d\mu(x)\right)d\mu(y)
\\
&=\int_G \overline{\langle y,\xi\rangle}f(y)\left(\int_G \overline{\langle x,\xi\rangle}g(x)d\mu(x)\right)d\mu(y)
\\
&=\hat{f}(\xi)\hat{g}(\xi).
\end{align*}
This shows that $f\ast g\in B_\sigma^s(G)$ and
\[
\|f\ast g\|_{B_\sigma^s(G)}\le \|f\|_{B_\sigma^s(G)}\|g\|_{L^1(G)}.
\]

Let $A(G)$ be the subspace of $L^1(G)$ spanned by $f\ast g$ with $f,g\in L^2(G)$. Again, as it is remarked in the comments after \cite[Proposition 10.17]{GQ}, $A(G)$ is an ideal of $L^1(G)$. Furthermore, if $f\in A(G)$, then $\hat{f}\in L^1(G)$ and the inversion formula \eqref{lcg1} holds.

\subsection{Duality}\label{sb4.4}

Let $\sigma \in \Sigma$ and $s\ge 0$. Denote the closure of $L^2(G)\cap B_\sigma^s(G)$ in $B_\sigma^s(G)$  by $K_\sigma^s(G)$, and define
\[
\tilde{B}_\sigma^{-s}(G)=\{g=\overline{\mathscr{F}}(\sigma^s h);\; h\in L^\infty(\hat{G}),\; \sigma^s h\in L^2(\hat{G})\} (\subset L^2(G)),
\]
that we endow with the norm
\[
\|g\|_{\tilde{B}_\sigma^{-s}(G)}:=\|\sigma^{-s}\hat{g}\|_{ L^\infty(\hat{G})}.
\]
Then consider the bilinear form:
\[
\mathfrak{b}: (L^2(G)\cap B_\sigma^s(G))\times \tilde{B}_\sigma^{-s}(G):(f,g)\mapsto \mathfrak{b}(f,g):=\int_G f(x)\overline{g(x)}d\mu(x).
\]
From the Plancherel theorem (e.g. \cite[Theorem 4.26]{Fo}) and we have
\begin{equation}\label{lcg2}
\int_G f(x)\overline{g(x)}d\mu(x)=\int_{\hat{G}}\hat{f}(\xi)\overline{\hat{g}(\xi)}d\hat{\mu}(x),\quad f,g\in L^2(G).
\end{equation}
For all $f\in L^2(G)\cap B_\sigma^s(G)$ and $g\in \tilde{B}_\sigma^{-s}(G)$, we have
\begin{align}
\left|  \int_{\hat{G}}\hat{f}(\xi)\overline{\hat{g}(\xi)}d\hat{\mu}(x)\right|&=\left|  \int_{\hat{G}}[\sigma^{s}\hat{f}(\xi)][\sigma ^{-s}\overline{\hat{g}(\xi)}]d\hat{\mu}(x)\right| \label{lcg3}
\\
&\le \|\sigma^s\hat{f}\|_{L^1(\hat{G})}\|\sigma^{-s}\hat{g}\|_{ L^\infty(\hat{G})}=\|f\|_{B_\sigma^s(G)}\|g\|_{\tilde{B}_\sigma^{-s}(G)}.\nonumber
\end{align}
\eqref{lcg2} and \eqref{lcg3} show that $\mathfrak{b}$ admits a continuous extension to $K_\sigma^s(G)\times \tilde{B}_\sigma^{-s}(G)$, denoted again by $\mathfrak{b}$, with 
\begin{equation}\label{lcg3.1}
|\mathfrak{b}(f,g)|\le \|f\|_{B_\sigma^s(G)}\|g\|_{\tilde{B}_\sigma^{-s}(G)},\quad (f,g)\in K_\sigma^s(G))\times \tilde{B}_\sigma^{-s}(G).
\end{equation}
For $g\in \tilde{B}_\sigma^{-s}(G)$, let $\mathfrak{b}_g: f\in K_\sigma^s(G)\mapsto \mathfrak{b}(f,g)$. By \eqref{lcg3.1}, $\mathfrak{b}_g\in [K_\sigma^s(G)]^\ast$ with 
\[
\|\mathfrak{b}_g\|_{[K_\sigma^s(G)]^\ast}\le \|g\|_{\tilde{B}_\sigma^{-s}(G)}.
\]
Under the additional assumption that $K_\sigma^s(G)$ is dense in $L^2(G)$, we verify that the linear continuous map
\[
g\in \tilde{B}_\sigma^{-s}(G)\mapsto \mathfrak{b}_g\in [K_\sigma^s(G)]^\ast
\]
is injective. Hence, $\tilde{B}_\sigma^{-s}(G)\hookrightarrow [K_\sigma^s(G)]^\ast$.

In the case where $G=\mathbb{R}^n$ and $\sigma(\xi)=\sqrt{1+|\xi|^2}$ ($\xi \in \mathbb{R}^n$) or $G=\mathbb{T}$ and $\sigma(j)=\sqrt{1+j^2}$ ($j \in \mathbb{Z}$), we have both $K_\sigma^s(G)=B_\sigma^s(G)$ and $B_\sigma^s(G)$ is dense in $L^2(G)$.

\subsection{Spectral Barron spaces on $\mathbb{T}^n$}\label{sb4.5}

In this subsection, we extend the results of Sections \ref{s2} and \ref{s3} to an $n$-dimensional torus. As the analysis is analogous to the one we carried out for $n=1$, we do not provide all the details.

We will use the notations:
\begin{align*}
&x\cdot y=\sum_{j=1}^n x_jy_j,\quad x=(x_1,\ldots ,x_n)\in \mathbb{R}^n,\; y=(y_1,\ldots ,y_n)\in \mathbb{R}^n,
\\
&|x|=\sqrt{x\cdot x},\quad x=(x_1,\ldots ,x_n)\in \mathbb{R}^n.
\end{align*}

The $n$-dimensional torus is the cartesian product of $n$ copies of $\mathbb{T}$: $\mathbb{T}^n:=\prod_{j=1}^n\mathbb{T}$. By \cite[Corollary 4.8]{Fo}, $\widehat{\mathbb{T}^n}\cong \mathbb{Z}^n$, from which we derive the Fourier inversion formula in $\mathbb{T}^n$:
\[
f(x)=\sum_{j\in \mathbb{Z}^n}e^{ix\cdot j}\hat{f}_j,\quad f\in B^0(\mathbb{T}^n),
\]
where
\[
\hat{f}_j=\frac{1}{(2\pi)^n}\int_{[0,2\pi]^n}f(x)e^{-i x\cdot j}dx,\quad j\in \mathbb{Z}^n.
\]

For $s> 0$, the spectral Barron space defined in Subsection \ref{sb4.2} when $\sigma (j)=\sqrt{1+|j|^2}:=\langle j\rangle$ ($j\in \mathbb{Z}^n$) will be denoted by $B^s(\mathbb{T}^n)$. Let $S^n$ be the vector space of complex-valued sequences $(a_j)_{j\in \mathbb{Z}^n}$ and
\begin{align*}
&\ell^{1,s}(\mathbb{Z}^n):=\left\{ a=(a_j)_{j\in \mathbb{Z}^n}\in S^n;\; \|a\|_{\ell^{1,s}(\mathbb{Z}^n)}:=\sum_{j\in \mathbb{Z}^n}\langle j\rangle^s|a_j|<\infty\right\},
\\
&\ell^{\infty,-s}(\mathbb{Z}^n):=\left\{ b=(b_j)_{j\in \mathbb{Z}^n}\in S^n;\; \|b\|_{\ell^{\infty,-s}(\mathbb{Z}^n)}:=\sup_{j\in \mathbb{Z}^n}\langle j\rangle^{-s}|b_j|<\infty\right\}.
\end{align*}
We verify that $(\ell^{1,s}(\mathbb{Z}^n),\|\cdot \|_{\ell^{1,s}(\mathbb{Z}^n)})$ and $(\ell^{\infty,-s}(\mathbb{Z}^n),\|\cdot\|_{\ell^{\infty,-s}(\mathbb{Z}^n)})$ are Banach spaces, and $\ell^{\infty,-s}(\mathbb{Z}^n)$ is identified with $[\ell^{1,s}(\mathbb{Z}^n)]^\ast$ with the duality pairing
\[
\langle b, a\rangle_{\infty,1}:=\sum_{j\in \mathbb{Z}^n}a_j\overline{b}_j,\quad a\in \ell^{1,s}(\mathbb{Z}^n),\; b\in \ell^{\infty,-s}(\mathbb{Z}^n).
\]
For simplicity, we use $(a_j)$ instead of $(a_j)_{j\in \mathbb{Z}^n}$. With the notation $\hat{f}=(\hat{f}_j)$, we have
\[
B^s(\mathbb{T}^n)=\{ f\in L^1(\mathbb{T}^n);\; \hat{f}\in \ell^{1,s}(\mathbb{Z}^n)\},
\]
and
\[
\|f\|_{B^s(\mathbb{T}^n)}=\|\hat{f}\|_{\ell^{1,s}(\mathbb{Z}^n)}.
\]
There exists an isometric isomorphism $\mathfrak{I}_s$ from $B^s(\mathbb{T}^n)$ onto $\ell^{1,s}(\mathbb{Z}^n)$. Then $[B^s(\mathbb{T}^n)]^\ast$ can be identified with $\ell^{\infty,-s}(\mathbb{Z}^n)$ and the duality pairing between $B^s(\mathbb{T}^n)$ and $[B^s(\mathbb{T}^n)]^\ast$, denoted by $\langle \cdot,\cdot \rangle_s$, is given as follows:
\[
\langle g,f\rangle_s:=\langle (\mathfrak{I}_s^{-1})^\ast(g), \mathfrak{I}_s(f)\rangle_{\infty,1},\quad f\in B^s(\mathbb{T}^n),\; g\in [B^s(\mathbb{T}^n)]^\ast.
\]
The spaces $B^s(\mathbb{T}^n)$ possess properties analogous to those of $B^s(\mathbb{T})$ ; we therefore omit the detailed statements.

We recall that for all $\alpha=(\alpha_1,\ldots ,\alpha_n)\in (\mathbb{N}\cup\{0\})^n$, $\partial^\alpha=\partial_1^{\alpha_1}\ldots \partial_n^{\alpha_n}$, $|\alpha|=\alpha_1+\ldots +\alpha_n$ and $x^\alpha=x_1^{\alpha_1}\ldots x_n^{\alpha_n}$. For sufficiently smooth $f:\mathbb{T}^n\rightarrow \mathbb{C}$, we have
\[
\widehat{\partial^\alpha f}_j=i^{|\alpha|}j^\alpha\hat{f}_j,\quad j\in \mathbb{Z}^n,\; \alpha\in (\mathbb{N}\cup\{0\})^n.
\]
In particular, we have
\[
\widehat{\Delta f}_j=-|j|^2 \hat{f}_j,\quad j\in \mathbb{Z}^n,
\]
and then
\[
\Delta f=-\sum_{j\in \mathbb{Z}^n}|j|^2 \hat{f}_je^{ix\cdot j},\quad f\in B^2(\mathbb{T}^n).
\]
Define the unbounded operator $A:B^0(\mathbb{T}^n)\rightarrow B^0(\mathbb{T}^n)$ by
\begin{equation}\label{Lap}
Au:=\Delta u,\quad u\in D(A):=B^2(\mathbb{T}^n).
\end{equation}
Recall that the resolvent set of $A$ is defined by
\[
\rho(A):=\{\lambda \in \mathbb{C};\; \lambda-A\colon D(A)\rightarrow B^0(\mathbb{T}^n)\; \text{is a bijection and}\; (\lambda-A)^{-1}\in \mathscr{B}(B^0(\mathbb{T}^n))\}.
\]

Let $f\in B^0(\mathbb{T}^n)$ and $\lambda \in \mathbb{C}$ such that $\Re \lambda>0$. Using that  unique solution of the equation in $B^2(\mathbb{T}^n)$ of the equation
\[
(\lambda-\Delta)u=f,
\]
is given by
\[
u=\sum_{j\in \mathbb{Z}^n}\frac{1}{\lambda +|j|^2} \hat{f}_je^{ix\cdot j},
\] 
we get that $\{\lambda \in \mathbb{C};\; \Re \lambda >0\}\subset \rho(A)$ and
\[
\|\lambda (\lambda -A)^{-1}\|_{\mathscr{B}(B^0(\mathbb{T}^n))}\le 1.
\]
Applying once again \cite[Proposition 2.1.11]{Lun}, we obtain that $A$ is sectorial. 

Note that the results established in Section \ref{s3} for $n=1$ can be adapted to any $n \ge 1$.

\section{Inverse problems in spectral Barron spaces of periodic equations}\label{s5}
In this section, we investigate two inverse problems within the framework of spectral Barron spaces for periodic equations. Both inverse problems arise from evolution equations, but their measurements are of different types.

\subsection{Backward heat equations in $B^0(\mathbb{T}^n)$}\label{sb4.6}

Let $A$ be the operator given by \eqref{Lap}. We showed in the previous subsection that $A$ is sectorial and consequently generates an analytic semigroup $(e^{tA})_{t\ge 0}$. It should be noted that most of the results established for $B^0(\mathbb{R}^n)$ in \cite{CLT2} remain valid, with minor modifications, for $B^0(\mathbb{T}^n)$. We leave the details to the interested reader.

Simple calculations show that $e^{tA}$ is given as follows:
\[
e^{tA}f=\sum_{j\in \mathbb{Z}^n}e^{-|j|^2t+ix\cdot j}\hat{f}_j,\quad f\in B^0(\mathbb{T}^n).
\]
Hence,
\begin{equation}\label{sg1}
\widehat{e^{tA}f}_j=e^{-|j|^2t }\hat{f}_j,\quad j\in \mathbb{Z}^n.
\end{equation}
In particular, we have
\[
\sup_{t\ge 0}\|e^{tA}\|_{\mathscr{B}(B^0(\mathbb{T}^n))}= 1.
\]

We fix $\tau >0$ and we consider the space
\[
\mathfrak{R}_\tau(\mathbb{T}^n):=\{ h\in C^0(\mathbb{T}^n);\; (e^{\tau|j|^2}\hat{h}_j)\in \ell^1(\mathbb{Z}^n)\},
\]
that we endow with norm
\[
\|h\|_{\mathfrak{R}_\tau(\mathbb{T}^n)}:=\|(e^{\tau|j|^2}\hat{h}_j)\|_{\ell^1(\mathbb{Z}^n)}.
\]

If $g\in \mathfrak{R}_\tau(\mathbb{T}^n)$, then $f=\sum_{j\in \mathbb{Z}} e^{|j|^2\tau +ix\cdot j}\hat{g}_j\in B^0(\mathbb{T}^n)$, and we verify  that $e^{\tau A}f=g$. This and \eqref{sg1} gives the following result.

\begin{lemma}\label{lemsg1}
The operator
\begin{equation}\label{sg2}
\mathfrak{I}_\tau:f\in B^0(\mathbb{T}^n)\mapsto e^{\tau A}f\in \mathfrak{R}_\tau(\mathbb{T}^n)
\end{equation}
is an isometric isomorphism.
\end{lemma}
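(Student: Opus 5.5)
The plan is to reduce everything to the Fourier side, where \eqref{sg1} makes $e^{\tau A}$ a diagonal multiplier, and to check in turn that $\mathfrak{I}_\tau$ maps into $\mathfrak{R}_\tau(\mathbb{T}^n)$, is isometric, and is onto.

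\emph{Well-definedness and isometry.} Let $f\in B^0(\mathbb{T}^n)$ and put $g:=e^{\tau A}f$. By \eqref{sg1}, $\hat g_j=e^{-\tau|j|^2}\hat f_j$ for every $j\in\mathbb{Z}^n$. Hence
\[
(e^{\tau|j|^2}\hat g_j)=(\hat f_j)\in \ell^1(\mathbb{Z}^n).
\]
Also $g\in B^0(\mathbb{T}^n)\subset C^0(\mathbb{T}^n)$, because $B^0(\mathbb{T}^n)\hookrightarrow C_{\mathrm b}(\mathbb{T}^n)$ and the semigroup leaves $B^0(\mathbb{T}^n)$ invariant. So $g\in\mathfrak{R}_\tau(\mathbb{T}^n)$, and
\[
\|g\|_{\mathfrak{R}_\tau(\mathbb{T}^n)}=\|\hat f\|_{\ell^1(\mathbb{Z}^n)}=\|f\|_{B^0(\mathbb{T}^n)}.
\]
Thus $\mathfrak{I}_\tau$ is a linear isometry. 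In particular it is injective; this can also be seen directly from the uniqueness of Fourier coefficients.

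\emph{Surjectivity.} This is exactly the observation made just before the lemma. Given $g\in\mathfrak{R}_\tau(\mathbb{T}^n)$, set $a_j:=e^{\tau|j|^2}\hat g_j$. Then $(a_j)\in\ell^1(\mathbb{Z}^n)$, so
\[
f(x):=\sum_{j\in\mathbb{Z}^n}a_je^{ix\cdot j}
\]
converges absolutely and uniformly. Therefore $f$ is continuous with $\hat f=a$, i.e.\ $f\in B^0(\mathbb{T}^n)$. By \eqref{sg1}, $\widehat{e^{\tau A}f}_j=e^{-\tau|j|^2}a_j=\hat g_j$ for all $j$. Since $g$ is continuous, the uniqueness theorem for Fourier coefficients (the $n$-dimensional analogue of \cite[Theorem 2.7]{Ka}, or the injectivity of $\mathscr{F}$ on $L^1(\mathbb{T}^n)$) gives $e^{\tau A}f=g$.

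\emph{Main obstacle.} The only delicate point is this last identification: it needs $g$ to be recovered from its Fourier coefficients. This is why $\mathfrak{R}_\tau(\mathbb{T}^n)$ is defined inside $C^0(\mathbb{T}^n)$. In fact $\hat g\in\ell^1(\mathbb{Z}^n)$, since $e^{\tau|j|^2}\ge1$, so the Fourier inversion formula on $\mathbb{T}^n$ from Subsection~\ref{sb4.5} applies and gives $g=\sum_j\hat g_je^{ix\cdot j}$ pointwise. The remaining verifications (linearity, and completeness of $\mathfrak{R}_\tau(\mathbb{T}^n)$, which follows from the isometry onto $\ell^1(\mathbb{Z}^n)$) are routine.
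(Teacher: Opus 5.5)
Your proposal is correct and follows essentially the same route as the paper: well-definedness and isometry come directly from the diagonal formula \eqref{sg1}, and surjectivity from the explicit preimage $f=\sum_j e^{\tau|j|^2}\hat g_j e^{ix\cdot j}$, identified with $g$ through uniqueness of Fourier coefficients. You also spell out the verifications that the paper leaves to the reader, namely the continuity of $g=e^{\tau A}f$ and the uniqueness step.
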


Note that $\mathfrak{I}_\tau$ is a smoothing operator, since $\mathfrak{R}_\tau(\mathbb{T}^n) \subset \bigcap_{s\ge 0}B^s(\mathbb{T}^n)$.

\begin{proposition}\label{prosg1}
Let $\mathfrak{I}_\tau$ be defined by \eqref{sg2}. We have for all $s>0$ and $f\in B^s(\mathbb{T}^n)$
\begin{equation}\label{sg3}
\|f\|_{B^0(\mathbb{T}^n)}\le e^{\rho \tau}\|\mathfrak{I}_\tau(f)\|_{B^0(\mathbb{T}^n)}+\rho^{-\frac{s}{2}}\|f\|_{B^s(\mathbb{T}^n)},\quad \rho >0.
\end{equation}
\end{proposition}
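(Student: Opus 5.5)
The plan is a frequency splitting in the Fourier coefficients, using the explicit action \eqref{sg1} of the semigroup. Fix $\rho>0$ and $f\in B^s(\mathbb{T}^n)$. We write
\[
\|f\|_{B^0(\mathbb{T}^n)}=\sum_{|j|^2\le \rho}|\hat f_j|+\sum_{|j|^2>\rho}|\hat f_j|
\]
and bound the two sums separately: the low frequencies by the data $\mathfrak{I}_\tau(f)=e^{\tau A}f$, and the high frequencies by the a priori bound in $B^s(\mathbb{T}^n)$.

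For the low-frequency part, \eqref{sg1} gives $\hat f_j=e^{\tau|j|^2}\widehat{(e^{\tau A}f)}_j$. For $|j|^2\le\rho$ we have $e^{\tau|j|^2}\le e^{\rho\tau}$. Hence
\[
\sum_{|j|^2\le \rho}|\hat f_j|\le e^{\rho\tau}\sum_{|j|^2\le\rho}\bigl|\widehat{(e^{\tau A}f)}_j\bigr|\le e^{\rho\tau}\|\mathfrak{I}_\tau(f)\|_{B^0(\mathbb{T}^n)}.
\]

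For the high-frequency part, $|j|^2>\rho$ gives $\langle j\rangle^2=1+|j|^2>\rho$, so $1\le \rho^{-s/2}\langle j\rangle^s$. Therefore
\[
\sum_{|j|^2>\rho}|\hat f_j|\le \rho^{-\frac s2}\sum_{|j|^2>\rho}\langle j\rangle^s|\hat f_j|\le\rho^{-\frac s2}\|f\|_{B^s(\mathbb{T}^n)}.
\]
Adding the two estimates yields \eqref{sg3}.

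There is no real obstacle. The only points to check are that $\mathfrak{I}_\tau(f)$ lies in $B^0(\mathbb{T}^n)$ with coefficients given by \eqref{sg1}, which follows from Lemma \ref{lemsg1} and $\mathfrak{R}_\tau(\mathbb{T}^n)\subset B^0(\mathbb{T}^n)$, and that the cutoff is taken on $|j|^2$ so that the exponent $e^{\rho\tau}$ and the power $\rho^{-s/2}$ match the statement exactly.
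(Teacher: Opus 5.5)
Your proof is correct and follows the paper's argument: the paper splits the sum at $|j|\le\sqrt{\rho}$ versus $|j|>\sqrt{\rho}$ (the same cutoff as your $|j|^2\le\rho$). It bounds the low frequencies via \eqref{sg1} by $e^{\rho\tau}\|\mathfrak{I}_\tau(f)\|_{B^0(\mathbb{T}^n)}$ and the high frequencies via $\langle j\rangle^{-s}\le\rho^{-s/2}$ by $\rho^{-s/2}\|f\|_{B^s(\mathbb{T}^n)}$, exactly as you do.
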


\begin{proof}
Let $f\in B^0(\mathbb{T}^n)$ and $g=e^{\tau A}f$. According to formula \eqref{sg1}, we obtain
\begin{equation}\label{sg4}
\sum_{|j|\le \sqrt{\rho}}|\hat{f}_j|=\sum_{|j|\le \sqrt{\rho}}e^{|j|^2\tau }|\hat{g}_j|\le e^{\rho \tau}\|g\|_{B^0(\mathbb{T}^n)},\quad \rho >0.
\end{equation}
On the other hand, we have
\begin{equation}\label{sg5}
\sum_{|j|> \sqrt{\rho}}|\hat{f}_j|=\sum_{|j|> \sqrt{\rho}}\langle j\rangle ^{-s}[\langle j\rangle ^s|\hat{f}_j|]\le \rho^{-\frac{s}{2}}\|f\|_{B^s(\mathbb{T}^n)},\quad \rho >0.
\end{equation}
Putting together \eqref{sg4} and \eqref{sg5}, we get \eqref{sg3}.
\end{proof}

As $\|\mathfrak{I}_\tau(f)\|_{B^0(\mathbb{T}^n)}<\|f\|_{B^s(\mathbb{T}^n)}$ if $f\in B^s(\mathbb{T}^n)\setminus\{0\}$,  by minimizing the right hand side of \eqref{sg3} with respect to $\rho$, we obtain the following corollary.

\begin{corollary}\label{corsg1}
Let $\mathfrak{I}_\tau$ be defined by \eqref{sg2}. We have for all $s>0$ and $f\in B^s(\mathbb{T}^n)\setminus\{0\}$
\[
\frac{\|f\|_{B^0(\mathbb{T}^n)}}{\|f\|_{B^s(\mathbb{T}^n)}}\le \left(\tau+\frac{s}{2}\right)^{\frac{s}{2}}\left|\ln \frac{\|\mathfrak{I}_\tau(f)\|_{B^0(\mathbb{T}^n)}}{\|f\|_{B^s(\mathbb{T}^n)}}\right|^{-\frac{s}{2}}.
\]
\end{corollary}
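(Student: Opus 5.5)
The plan is to start from Proposition \ref{prosg1} and pick $\rho$ explicitly rather than performing an exact minimization. Set $\epsilon:=\|\mathfrak{I}_\tau(f)\|_{B^0(\mathbb{T}^n)}/\|f\|_{B^s(\mathbb{T}^n)}$. By \eqref{sg1} and $\langle j\rangle\ge 1$ we have $\|\mathfrak{I}_\tau(f)\|_{B^0}\le\|f\|_{B^0}\le\|f\|_{B^s}$. Injectivity of $\mathfrak{I}_\tau$ from Lemma \ref{lemsg1} gives $\epsilon>0$. The strict inequality $\epsilon<1$ holds because $f\ne0$ forces either some $\hat f_j\ne0$ with $j\ne0$, where $e^{-|j|^2\tau}<1$, or $f$ constant, where $\langle j\rangle^s>1$ fails only at $j=0$. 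In the constant case one has equality $\|\mathfrak{I}_\tau f\|_{B^0}=\|f\|_{B^s}$. This is a degenerate case that I would flag: the stated inequality then reads with $|\ln 1|^{-s/2}=\infty$ and holds trivially. With that convention, $|\ln\epsilon|=-\ln\epsilon\in(0,\infty]$.

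Dividing \eqref{sg3} by $\|f\|_{B^s}$ gives
\[
\frac{\|f\|_{B^0}}{\|f\|_{B^s}}\le e^{\rho\tau}\epsilon+\rho^{-\frac{s}{2}},\qquad \rho>0.
\]
I would choose $\rho$ so that $e^{\rho\tau}\epsilon$ is comparable to $\rho^{-s/2}$ up to a logarithmic loss. The natural first choice is $\rho=|\ln\epsilon|/(\tau+\frac{s}{2})$, which gives $e^{\rho\tau}\epsilon=\epsilon^{1-\tau/(\tau+s/2)}=\epsilon^{\frac{s/2}{\tau+s/2}}$. The aim is then to bound this term by a multiple of $\rho^{-s/2}=(\tau+\frac{s}{2})^{s/2}|\ln\epsilon|^{-s/2}$.

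That bound is the main obstacle. It is the elementary inequality $\epsilon^{a}\le C\,|\ln\epsilon|^{-s/2}$ on $(0,1)$, with $a=\frac{s}{2(\tau+s/2)}$. Writing $y=|\ln\epsilon|$, it reduces to $\sup_{y>0}y^{s/2}e^{-ay}=(s/(2ae))^{s/2}=((\tau+\frac{s}{2})/e)^{s/2}$. This yields
\[
\frac{\|f\|_{B^0}}{\|f\|_{B^s}}\le\Bigl(\tau+\frac{s}{2}\Bigr)^{\frac{s}{2}}\bigl(e^{-\frac{s}{2}}+1\bigr)|\ln\epsilon|^{-\frac{s}{2}}.
\]

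This has the right shape, but there is an extra factor $1+e^{-s/2}$ compared with the statement. To remove it, I would try a slightly different $\rho$. A simpler option is to exploit $\|f\|_{B^0}\le\|f\|_{B^s}$: if $|\ln\epsilon|\le\tau+\frac{s}{2}$, the claimed right-hand side is at least $1$ and the inequality is trivial. Otherwise one can take $\rho=|\ln\epsilon|/\tau-\frac{s}{2\tau}\ln(|\ln\epsilon|/\tau)$, in the spirit of a Lambert-type choice, so that $e^{\rho\tau}\epsilon=(\tau/|\ln\epsilon|)^{s/2}$ exactly. The remaining work is to check that the combined sum stays below $(\tau+\frac{s}{2})^{s/2}|\ln\epsilon|^{-s/2}$. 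This requires $\rho\ge|\ln\epsilon|/(\tau+s/2)$ up to the needed slack, which I expect to follow from $\ln x\le x-1$ but which is exactly where the constant is delicate. If this bookkeeping fails, I would settle for the constant $(1+e^{-s/2})(\tau+\frac{s}{2})^{s/2}$, which gives the same logarithmic stability.
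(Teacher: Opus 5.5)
Your approach is the paper's: its whole proof is the remark that one minimizes the right-hand side of \eqref{sg3} over $\rho$. Your preliminary analysis is correct, and it catches an error in the paper. The paper asserts $\|\mathfrak{I}_\tau(f)\|_{B^0}<\|f\|_{B^s}$ for every $f\neq 0$, but as you note, equality holds for constants. Your choice $\rho=|\ln\epsilon|/(\tau+\frac{s}{2})$, together with $\sup_{y>0}y^{s/2}e^{-ay}=((\tau+\frac{s}{2})/e)^{s/2}$, rigorously gives the estimate with constant $(1+e^{-s/2})(\tau+\frac{s}{2})^{s/2}$.

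The step meant to remove the factor $1+e^{-s/2}$ is a genuine gap, and it cannot be closed. Your Lambert-type choice already fails for $s=2$, $\tau=1$. There the first term equals $1/|\ln\epsilon|$, so you would need $\rho\ge|\ln\epsilon|$, whereas $\rho=|\ln\epsilon|-\ln|\ln\epsilon|<|\ln\epsilon|$. No other $\rho$ works either. For $s=2$, $\tau=1$, $|\ln\epsilon|=3$ one has $\min_{\rho>0}(e^{\rho-3}+\rho^{-1})\approx 0.857$. The claimed bound there is $2/3$, and the trivial bound $\|f\|_{B^0}\le\|f\|_{B^s}$ only gives $1$. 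So \eqref{sg3}, which is all the paper's one-line argument uses, cannot yield the constant $(\tau+\frac{s}{2})^{s/2}$. This holds even in this case, where the statement itself is true (see below).

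More seriously, the inequality as stated is false for small $s$, so no bookkeeping can produce it. Take $n=1$, $s=0.1$, $\tau=0.01$ and $f(x)=0.7+0.3e^{17ix}$. Then:
\begin{itemize}
\item $\|f\|_{B^0}=1$;
\item $\|f\|_{B^s}=0.7+0.3\cdot 290^{0.05}\approx 1.0983$;
\item $\|\mathfrak{I}_\tau(f)\|_{B^0}=0.7+0.3e^{-2.89}\approx 0.7167$, so $\epsilon\approx 0.6525$.
\end{itemize}
The left-hand side is therefore $\approx 0.9105$, while the right-hand side is $(0.06/|\ln 0.6525|)^{0.05}\approx 0.9065$.

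The correct conclusion of your argument, and of the paper's, is your fallback estimate with constant $(1+e^{-s/2})(\tau+\frac{s}{2})^{s/2}$. It has the same logarithmic rate $|\ln\epsilon|^{-s/2}$.

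The sharp constant can be recovered only in a restricted range, and by working on the Fourier side rather than through \eqref{sg3}. Suppose $s\ge 2$. Set $q_j=|\hat f_j|/\|f\|_{B^0}$ and $R=\|f\|_{B^0}/\|f\|_{B^s}$. Then $\epsilon=R\sum_j q_j G(\langle j\rangle^s)$ with $G(x)=e^{-\tau(x^{2/s}-1)}$, and $\sum_j q_j\langle j\rangle^s=1/R$. For $s\ge2$, $G$ is convex on $[1,\infty)$. Jensen's inequality then gives $\epsilon\ge R\,e^{\tau(1-R^{-2/s})}$. With $u=R^{-2/s}$ this yields $|\ln\epsilon|\le \frac{s}{2}\ln u+\tau(u-1)\le(\tau+\frac{s}{2})u$, which is exactly the claimed inequality.
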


The previous results we established for the initial-to-final state operator $\mathfrak{I}_\tau$ are exactly the same as those obtained in \cite{CLT2} when $\mathbb{T}^n$ is replaced by $\mathbb{R}^n$.

\subsection{Inverse source problems in $B^0(\mathbb{T}^n)$}

In this subsection, the notations are those of the previous one.
Let $(b_j)$ a sequence in $L^1(]0,\tau[)$ satisfying: there exists a sequence $(\epsilon_j)$ in $[0,1[$ such that $\epsilon:=\|(\epsilon_j)\|_{\ell ^\infty(\mathbb{Z}^n)}<1$, and
\[
\int_0^\tau |b_j(s)|e^{s|j|^2}ds\le \epsilon_j,\quad j\in \mathbb{Z}^n.
\]
Set
\[
a_j(t)=1+\int_0^t b_j(s)ds,\quad t\in [0,\tau],\; j\in \mathbb{Z}^n.
\]
Define 
\[
L: f\in B^0(\mathbb{T}^n)\mapsto L(f)\in B^0(\mathbb{T}^n): L(f)(t):=\sum_{j\in \mathbb{Z}}a_j(t)e^{ix\cdot j}\hat{f}_j,
\]
and 
\[
S(f)(t)=\int_0^te^{sA}L(f)(t-s)ds,\quad t\ge 0,\; f\in B^0(\mathbb{T}^n).
\]
Note that $u:=S(f)$ is the mild solution of the equation
\[
u'(t)=Au(t)+ L(f)(t),\; t>0, \quad u(0)=0.
\]

\begin{proposition}\label{proie1}
For all $k\in \mathfrak{R}_\tau(\mathbb{T}^n)$, there exists a unique $f\in B^0(\mathbb{T}^n)$ such that $S(f)'(\tau)=k$. In addition, we have
\begin{equation}\label{ie0}
\|f\|_{B^0(\mathbb{T}^n)}\le (1-\epsilon)^{-1}\|k\|_{\mathfrak{R}_\tau(\mathbb{T}^n)}.
\end{equation}
\end{proposition}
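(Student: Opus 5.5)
The plan is to diagonalize everything in the Fourier basis, as in the proof of Proposition \ref{prosg1}, and reduce the statement to a scalar identity for each $j\in\mathbb{Z}^n$. By \eqref{sg1} and the definition of $L$, for $f\in B^0(\mathbb{T}^n)$ and $t\in[0,\tau]$ we have
\[
\widehat{S(f)(t)}_j=\hat f_j\int_0^t e^{-s|j|^2}a_j(t-s)\,ds=\hat f_j\int_0^t e^{-(t-r)|j|^2}a_j(r)\,dr=:\hat f_j F_j(t).
\]
Interchanging the sum over $j$ with the integral in $s$ is justified by absolute convergence. Indeed, $|a_j(t)|\le 1+\int_0^\tau|b_j|\le 1+\epsilon_j\le 2$, because $e^{s|j|^2}\ge1$.

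\textbf{Step 1: scalar derivative.} Since $a_j$ is absolutely continuous with $a_j'=b_j$ a.e. and $a_j(0)=1$, differentiating the first form of $F_j$ gives
\[
F_j'(t)=e^{-t|j|^2}+\int_0^t e^{-(t-r)|j|^2}b_j(r)\,dr .
\]
At $t=\tau$ this becomes
\[
F_j'(\tau)=e^{-\tau|j|^2}m_j,\qquad m_j:=1+\int_0^\tau e^{r|j|^2}b_j(r)\,dr .
\]
The hypothesis on $(b_j)$ gives $|m_j-1|\le\epsilon_j\le\epsilon<1$, hence $|m_j|\ge 1-\epsilon>0$.

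\textbf{Step 2: justify $S(f)'(\tau)$ in $B^0(\mathbb{T}^n)$.} I must check that $S(f)$ is differentiable as a $B^0(\mathbb{T}^n)$-valued map and that its derivative is the series with coefficients $\hat f_jF_j'(t)$. For $t\in[0,\tau]$ we have
\[
|F_j'(t)|\le 1+e^{-t|j|^2}\int_0^t e^{r|j|^2}|b_j(r)|\,dr\le 1+\epsilon_j\le2 .
\]
So the difference quotients $(F_j(t+h)-F_j(t))/h$ are bounded by $2$ (mean value in integral form) and converge for each $j$. Dominated convergence in $\ell^1(\mathbb{Z}^n)$ with dominating sequence $2|\hat f_j|$ then gives convergence in $B^0(\mathbb{T}^n)$. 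At $t=\tau$ the derivative is understood from the left. This is the only genuinely technical point, and I expect it to be the main obstacle, mild as it is. It requires a.e. differentiability of the inner integral, which I would handle via the absolutely continuous representation of $F_j$ rather than pointwise.

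\textbf{Step 3: solve.} By Steps 1–2, $S(f)'(\tau)=k$ is equivalent to
\[
e^{-\tau|j|^2}m_j\hat f_j=\hat k_j\quad\text{for all } j\in\mathbb{Z}^n .
\]
Because $m_j\ne0$, this forces $\hat f_j=e^{\tau|j|^2}\hat k_j/m_j$, which gives uniqueness, by the bijectivity of $f\mapsto\hat f$ onto $\ell^1(\mathbb{Z}^n)$. Conversely, for $k\in\mathfrak{R}_\tau(\mathbb{T}^n)$ this sequence satisfies
\[
\sum_j|\hat f_j|\le(1-\epsilon)^{-1}\sum_j e^{\tau|j|^2}|\hat k_j|=(1-\epsilon)^{-1}\|k\|_{\mathfrak{R}_\tau(\mathbb{T}^n)}<\infty .
\]
Hence it defines $f\in B^0(\mathbb{T}^n)$ with $S(f)'(\tau)=k$, and the same computation yields \eqref{ie0}.
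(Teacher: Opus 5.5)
Your proposal is correct and is essentially the paper's argument. The paper sets $g=e^{\tau A}f$, rewrites $S(f)'(\tau)=k$ as $g+T(g)=k$ in $\mathfrak{R}_\tau(\mathbb{T}^n)$ with $\|T\|\le\epsilon$, and inverts $I+T$ by a Neumann series; since $T$ is the Fourier multiplier by $m_j-1$, this is exactly your coefficientwise division by $m_j$ with $|m_j|\ge 1-\epsilon$, and your Step 2 (differentiability of $S(f)$ as a $B^0(\mathbb{T}^n)$-valued map via dominated convergence in $\ell^1$) justifies a formula for $S(f)'(t)$ that the paper takes for granted.
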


\begin{proof}
Let $k\in \mathfrak{R}_\tau(\mathbb{T}^n)$. As
\[
S(f)'(t)=e^{tA}f+\int_0^te^{sA}L(f)'(t-s)ds,\quad t\ge 0,
\]
we seek $f\in B^0(\mathbb{T}^n)$ such that
\begin{equation}\label{ie1}
e^{\tau A}f+\int_0^\tau e^{sA}L(f)'(\tau-s)ds=k.
\end{equation}
Let $g=\mathfrak{I}_\tau(f)\in \mathfrak{R}_\tau(\mathbb{T}^n)$. In this case, $\hat{f}_j=e^{\tau|j|^2}\hat{g}_j$ and then
\[
L(f)'(\tau-s)=\sum_{j\in \mathbb{Z}}b_j(\tau-s)e^{\tau|j|^2+ix\cdot j}\hat{g}_j.
\]
Consequently, we have
\[
e^{sA}L(f)'(\tau-s)=\sum_{j\in \mathbb{Z}}b_j(\tau-s)e^{(\tau-s)|j|^2+ix\cdot j}\hat{g}_j.
\]
Let
\[
T(g):=\sum_{j\in \mathbb{Z}}\int_0^\tau b_j(s)e^{s |j|^2+ix\cdot j} \hat{g}_j ds.
\]
With this new notation, \eqref{ie1} takes the form
\begin{equation}\label{ie2}
g+T(g)=k.
\end{equation}
As 
\[
|\widehat{T(g)}_j|\le \epsilon_j|\hat{g}_j|,
\]
we get $T\in \mathscr{B}(\mathfrak{R}_\tau(\mathbb{T}^n))$ and $\|T\|_{\mathscr{B}(\mathfrak{R}_\tau(\mathbb{T}^n))}\le \epsilon$. Therefore, $I+T$ is invertible and hence \eqref{ie2} admits a unique solution $g\in \mathfrak{R}_\tau(\mathbb{T}^n)$ given by $g=(I+T)^{-1}k$. Consequently, \eqref{ie1} possesses a unique solution $f=\mathfrak{I}_\tau^{-1}g\in B^0(\mathbb{T}^n)$. Furthermore, we have
\[
\|f\|_{B^0(\mathbb{T}^n)}=\|g\|_{\mathfrak{R}_\tau(\mathbb{T}^n)}\le (1-\epsilon)^{-1}\|k\|_{\mathfrak{R}_\tau(\mathbb{T}^n)}.
\]
The proof is complete.
\end{proof}

%
%
%
%
%
%
%
%
%

\bibliographystyle{amsplain}
\bibliography{refs}

\end{document}